\newtheorem{Th}{Theorem}[section]
\newtheorem{Lem}[Th]{Lemma}
\newtheorem{Cor}[Th]{Corollary}
\newcommand{\cT}{{\mathcal T}}
\newcommand{\cTto}{\stackrel{\cT}{\longrightarrow}}
   \newcommand{\eps}{\varepsilon}
   \def\id{\mathrm{id}}
   \def\Z{\mathbb{Z}}
   \def\N{\mathbb{N}}
   \def\R{\mathbb{R}}
   \def\F{{\cal F}}
   \def\J{\mathcal{J}}
   \def\D{\mathcal{D}}
\begin{document}
\thispagestyle{empty}
\begin{center}
{\Large{\sc Solutions to a nonlinear Schr\"odinger equation with periodic potential and zero on the boundary of the spectrum}}

\vspace{5mm}

Jarosław Mederski\footnote{The study was supported by research grant NCN 2013/09/B/ST1/01963}
\\\mbox{}\\ 
\today
\end{center}

\begin{abstract}
We study the following nonlinear Schr\"odinger equation
$$
\left\{
\begin{array}{ll}
    -\Delta u + V(x) u = g(x,u)
    &
    \hbox{for } x\in\R^N,\\
    u(x)\to 0
    &
    \hbox{as } |x|\to\infty
\end{array}
\right.
$$
where $V:\R^N\to\R$ and $g:\R^N\times\R\to\R$ are periodic in $x$. We assume that $0$ is a right boundary point of the essential spectrum of $-\Delta+V$.  The superlinear and subcritical term g satisfies a Nehari type monotonicity condition. We employ a Nehari manifold type technique in a strongly indefitnite setting and obtain the existence of a ground state solution.  Moreover we get infinitely many geometrically distinct solutions provided that $g$ is odd.
\end{abstract}

{\bf MSC 2010:} Primary: 35Q55; Secondary: 35J10, 35J20, 58E05

{\bf Key words:} Schr\"odinger equation, ground state, variational methods, strongly indefinite functional, Nehari-Pankov manifold.

\section*{Introduction}
\setcounter{section}{1}

We are concerned with the following nonlinear Schr\"odinger equation
\begin{equation}\label{NSE}
\left\{
\begin{array}{ll}
    -\Delta u + V(x) u = g(x,u)
    &
    \hbox{for } x\in\R^N,\\
    u(x)\to 0
    &
    \hbox{as } |x|\to\infty,
\end{array}
\right.
\end{equation}
where $V:\R^N\to\R$ is a periodic potential and $g:\R^N\times\R\to\R$ has superlinear growth. This equation appears in mathematical physics, e.g. when one studies standing waves $\Phi(x,t)=u(x)e^{-\frac{iEt}{\hslash}}$ of the time-dependent Schr\"odinger equation of the form
$$i\hslash\frac{\partial \Phi}{\partial t}=
-\frac{\hslash^2}{2m}\Delta \Phi+W(x)\Phi-f(x,|\Phi|)\Phi.$$
If the potential $V$ is periodic, then (\ref{NSE}) is of particular interest since it has a wide range of physical applications, e.g. in photonic crystals, where one considers periodic optical nanostructures (see \cite{Pankov} and references therein). It is well-known that  the spectrum $\sigma(-\Delta+V)$ of $-\Delta+V$ is purely continuous and may contain gaps, i.e. open intervals free of spectrum (see \cite{ReedSimon}). When $\inf\sigma(-\Delta+V)>0$ or $0$ lies in a gap of the spectrum $\sigma(-\Delta+V)$ 
then nonlinear Schr\"odinger equations have been widely investigated by many authors (see \cite{CotiZelati,Rabinowitz,AlamaLi,BuffoniJeanStuart,TroestlerWillem,KryszSzulkin,DingLee} and references therein) and nontrivial solutions to (\ref{NSE}) have been obtained. Ground state solutions, i.e. nontrivial solutions with the least possible energy, play an important role in physics and their existence has been studied e.g. in \cite{LiWangZeng,Pankov,SzulkinWeth,Liu}. If $V=0$ then $\sigma(-\Delta+V)=[0,+\infty)$ and the problem has been investigated in a classical work \cite{BerLions} or in a recent one \cite{AlvesSoutoMontenegro} (see also references therein). If $V$ is constant and negative then $0$ is an interior point of $\sigma(-\Delta+V)$ and solutions to (\ref{NSE}) have been found in \cite{EvequozWeth}.
\\ 
\indent In the present work, we focus on the situation when $0$ lies in the spectrum of $-\Delta+V$ and is the left endpoint of a spectral gap.
As far as we know there are only three papers dealing with this case. In \cite{BartschDingPeriodic} Bartsch and Ding obtained a nontrivial solution to (\ref{NSE}) assuming, among others, the following Ambrosetti-Rabinowitz condition:
\begin{equation}\label{ARcond}
g(x,u)u\geq \gamma G(x, u)  > 0\hbox{ for some }\gamma > 2\hbox{ and all }u \in\R\setminus\{0\},\; x\in\R^N,
\end{equation}
and a lower bound estimate:
\begin{equation}\label{LBcond}
G(x,u)\geq b |u|^{\mu}\hbox{ for some }b>0,\;\mu > 2\hbox{ and all }u \in\R,\; x\in\R^N,
\end{equation}
where $G$ is the primitive of $g$ with respect to $u$.
Applying a generalized linking theorem due to Kryszewski and Szulkin \cite{KryszSzulkin}, they proved that there is a solution in $H^2_{loc}(\R^N)\cap L^t(\R^N)$ for $\mu\leq t\leq 2^*$, where $2^*=\frac{2N}{N-2}$ if $N\geq 3$, and $2^*=\infty$ if $N=1,2$.
If $g$ is odd then the existence of infinitely many geometrically distinct solutions was obtained as well by means of an abstract critical point theory involving the $(PS)_I$-attractor concept (see Section 4 in \cite{BartschDingPeriodic} for details). In \cite{WillemZou} Willem and Zou relaxed condition (\ref{ARcond}) and they dealt with the lack of boundedness of Palais-Smale sequences. The authors developed the so-called monotonicity trick for strongly indefinite problems and established weak linking results. Recently Yang, Chen and Ding in \cite{YangChenDing} considered a Nehari-type monotone condition (see $(G5)$ below) instead of (\ref{ARcond}) and obtained a solution to (\ref{NSE}) using a variant of weak linking due to Schechter and Zou \cite{SchechterZou}. The lower bound estimate (\ref{LBcond}) has been assumed so far.
\\
\indent 
In this paper, our first aim is to prove the existence of a ground state solution to (\ref{NSE}) under the assumption that $0$ lies in the spectrum of $-\Delta+V$ and is the left endpoint of a spectral gap. As far as we know this is the first paper dealing with ground states in this case. Moreover, neither (\ref{ARcond}) nor (\ref{LBcond}) are assumed. 
Namely, throughout the paper we impose the following conditions.\\
 
\noindent {\bf (V)} $V\in C(\R^N,\R)$, $V$ is $1$-periodic in $x_i$, $i=1,2,...,N$,
$0\in\sigma(-\Delta+V)$ and there exists $\beta>0$ such that $(0,\beta]\cap\sigma(-\Delta+V)=\emptyset$.

\noindent {\bf (G1)} $g\in C(\R^N\times\R,\R)$, $g$ is $1$-periodic in $x_i$, $i=1,2,...,N.$

\noindent {\bf (G2)} There are $a>0$ and $2<\mu\leq p<2^*$ such that
$$|g(x,u)|\leq a(|u|^{\mu-1}+|u|^{p-1})\hbox{ for all }u \in\R,\; x\in\R^N.$$

\noindent {\bf (G3)} There is $b>0$ such that 
$$G(x,u)\geq b |u|^{\mu}\hbox{ for all }|u|\leq 1,\; x\in\R^N.$$

\noindent {\bf (G4)} $G(x,u)/|u|^2\to\infty$ uniformly in $x$ as $|u|\to\infty$.

\noindent {\bf (G5)} $u\mapsto g(x,u)/|u|$ is strictly increasing on $(-\infty,0)$ and $(0,\infty)$.

We point out that $(G3)$ and $(G4)$ are substantially weaker than (\ref{LBcond}). Indeed, take a nonlinearity of the type 
$$g(x,u)=q(x)u\ln(1+|u|^{p-2}),\; q(x)\geq \inf_{\R^N} q >0,$$
where $q:\R^N\to\R$ is continuous and $1$-periodic in $x_i$, $i=1,2,...,N$. Observe that conditions $(G1)-(G5)$ are obeyed with 
$2<\mu=p<2^*$,
but (\ref{LBcond}) does not hold. The above  nonlinearity has recently attracted attention of many authors since the Ambrosetti-Rabinowitz condition (\ref{ARcond}) is not satisfied and thus Palais-Smale sequences do not have to be bounded (see e.g. \cite{Jeanjean,LiuWang,DingLee,MiyagakiSouto}).\\
\indent Assumptions $(V)$, $(G1)-(G5)$ allow to find a function space $E_{2,\mu}$ (see Section \ref{SectionVar}) on which the energy functional associated to (\ref{NSE})
\begin{equation}\label{eqJFormula}
\J(u):=\frac{1}{2}\int_{\R^N}|\nabla u|^2+V(x)|u|^2\;dx-\int_{\R^N}G(x,u)\;dx
\end{equation}
is a well-defined $C^1$-map. Moreover critical points of $\J$ correspond to  solutions to (\ref{NSE}). In order to find ground state solutions we consider the Nehari-Pankov manifold $\mathcal{N}\subset E_{2,\mu}$ defined later by (\ref{NehariDef}).\\
\indent Our main results read as follows. For the precise definitions see the next sections.
\begin{Th}\label{Th1}
If assumptions $(V)$, $(G1)-(G5)$ are satisfied then $(\ref{NSE})$ has a ground state solution $u\in\mathcal{N}$ such that $\J(u)=\inf_{\mathcal{N}} \J>0$. Moreover $u\in H^2_{loc}(\R^N)\cap L^t(\R^N)$ for $\mu\leq t\leq 2^*$.
\end{Th}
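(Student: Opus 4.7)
The plan is to realize $u$ as a minimizer of $\J$ on the Nehari--Pankov manifold $\mathcal{N}$ via a Szulkin--Weth type argument adapted to the degenerate setting where $0\in\sigma(-\Delta+V)$. First I would set up the variational framework: use the spectral projectors of $-\Delta+V$ in $L^2(\R^N)$ to decompose the ambient space as $E_{2,\mu}=E^+\oplus E^-$, where on $E^-$ the quadratic part $Q(u)=\int |\nabla u|^2+V|u|^2\,dx$ is negative semidefinite and on $E^+$ it is nonnegative, possibly degenerating as the spectrum is approached from the right of $0$. Because of this degeneracy, $E^+$ is not contained in $L^2$, so the ``correct'' norm on $E^+$ combines an $L^2$-gradient piece with a $\mu$-moment controlled by $(G3)$; this is what makes $\J$ a well-defined $C^1$ functional (I would rely on the construction of $E_{2,\mu}$ given in Section \ref{SectionVar}).

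Next I would establish the Nehari--Pankov geometry. For each $u\in E^+\setminus\{0\}$ I would analyze the map $\phi_u:\R^+\times E^-\to\R$, $\phi_u(t,v)=\J(tu+v)$. Using $(G4)$ to control large arguments, $(G2)$ to control small ones, and the strict monotonicity $(G5)$ to ensure uniqueness of the critical point, I would show that $\phi_u$ attains a unique strict maximum at some pair $(t_u,v_u)$ with $m(u):=t_u u+v_u\in\mathcal{N}$, and that this maximum is strictly positive with $\J(m(u))\ge\rho>0$ for some $\rho$ independent of $u$. This yields $c:=\inf_{\mathcal{N}}\J>0$ and the standard fact that any $u\in\mathcal{N}$ with $\J(u)=c$ is already a critical point of $\J$ on the whole space (the linking of $\mathcal{N}$ with $E^-$ removes the strongly indefinite sign).

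The principal obstacle is the boundedness and compactness of a minimizing sequence $(u_n)\subset\mathcal{N}$: the absence of the Ambrosetti--Rabinowitz condition and the degeneracy of $Q$ near $0$ both forbid standard arguments. I would handle this in two stages. For boundedness, I argue by contradiction: if $\|u_n\|\to\infty$, set $w_n=u_n/\|u_n\|$ and split into a vanishing and a non-vanishing case in the sense of Lions. Vanishing of $w_n^+$ in $L^q$ for $2<q<2^*$ combined with the maximizing property $\J(u_n)\ge\J(s u_n^+ +v)$ for all $s\ge 0$, $v\in E^-$, together with $(G4)$, would contradict $\J(u_n)\to c<\infty$. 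Non-vanishing, after a suitable translation by elements of $\Z^N$ (using periodicity in $(V)$, $(G1)$) and passage to a weak limit, would again contradict $(G4)$ on the support of the limit. Once boundedness is secured, a second Lions/vanishing dichotomy applied to $(u_n)$ itself combined with $(G3)$ rules out vanishing, so after $\Z^N$-translation one obtains a nontrivial weak limit $u\in E_{2,\mu}$.

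Finally I would verify that $u$ is the desired ground state. Weak-to-weak continuity of $\J'$ on $E_{2,\mu}$ gives $\J'(u)=0$, so $u\in\mathcal{N}$, hence $\J(u)\ge c$. The reverse inequality follows from a Fatou-type lower semicontinuity argument applied to the representation $\J(u)=\frac{1}{2}\J'(u)u+\int_{\R^N}[\tfrac12 g(x,u)u-G(x,u)]\,dx$, where $(G5)$ ensures that the integrand is nonnegative and weakly lower semicontinuous along translated subsequences. The regularity statement $u\in H^2_{\mathrm{loc}}\cap L^t$ for $\mu\le t\le 2^*$ then follows from $(G2)$ by a standard bootstrap, using $u\in L^\mu\cap L^{2^*}$ as the starting integrability that the space $E_{2,\mu}$ already provides. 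The hard step throughout is the boundedness argument: handling the non-Ambrosetti--Rabinowitz growth and the spectral degeneracy simultaneously is exactly where the Nehari--Pankov manifold, through its built-in maximization property, must carry more weight than in the non-degenerate periodic case.
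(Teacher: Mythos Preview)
Your overall strategy---Nehari--Pankov reduction \`a la Szulkin--Weth, coercivity on $\mathcal{N}$ via a Lions dichotomy, $\Z^N$-translation to recover a nontrivial weak limit, and Fatou applied to $\tfrac12 g(x,u)u-G(x,u)\ge 0$ to close the minimization---is exactly the route the paper takes. The paper packages the geometric verification $(J1)$--$(J6)$ and the existence of a minimizing Palais--Smale sequence into an abstract theorem (Theorem~\ref{ThLink1}, which uses Ekeland on $S^+$ via the homeomorphism $n:S^+\to\mathcal{N}$), but your outline captures the same content.

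There is, however, a genuine error in your description of the spectral decomposition that would derail several steps if carried through. Under assumption $(V)$, $0$ is the \emph{left} endpoint of a gap: $(0,\beta]\cap\sigma(-\Delta+V)=\emptyset$ while $0\in\sigma(-\Delta+V)$. Hence $E^+$, which corresponds to the spectral part in $[\beta,\infty)$, carries a quadratic form that is uniformly positive definite, and $\|\cdot\|_E$ is \emph{equivalent} to $\|\cdot\|_{H^1}$ on $E^+$; in particular $E^+\subset H^1(\R^N)\subset L^2(\R^N)$. The degeneracy sits on the other side: $E'$ corresponds to $\sigma(-\Delta+V)\cap(-\infty,0]$, and it is on $E'$ that $\|\cdot\|_E$ is strictly weaker than $\|\cdot\|_{H^1}$, forcing the completion and the $L^{2,\mu}$-augmentation that defines $E'_{2,\mu}$. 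Your proposal reverses this (``$E^+$ is not contained in $L^2$, so the `correct' norm on $E^+$ combines an $L^2$-gradient piece with a $\mu$-moment''), and that matters concretely in at least three places: the mountain-pass lower bound $\inf_{\|u\|=r,\,u\in E^+}\J(u)>0$ uses the Sobolev embeddings available on $E^+\subset H^1$; the Lions-lemma steps in both the coercivity proof and the nonvanishing argument are applied to $v_n^+$ and $u_n^+$, which works precisely because these lie in $H^1(\R^N)$; and the $\cT$-topology used for lower semicontinuity is strong on $E^+$ and weak on $E'_{2,\mu}$, not the other way around. Once you put the degeneracy on the correct subspace $E'$, your sketch matches the paper essentially line for line.

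One smaller point: you pass from ``minimizing sequence in $\mathcal{N}$'' to ``$\J'(u)=0$ for the weak limit'' by weak-to-weak continuity of $\J'$, but that step requires $\J'(u_n)\to 0$. The paper secures this by pulling $\J$ back to $S^+$ through the homeomorphism $n$ and invoking Ekeland there (Theorem~\ref{ThLink1}(a),(b)); you should make the Palais--Smale property of your minimizing sequence explicit.
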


Furthermore, we establish the following multiplicity result and we would like to emphasize that (\ref{ARcond}) is not assumed as opposed to \cite{BartschDingPeriodic}.

\begin{Th}\label{Th2}
If assumptions $(V)$, $(G1)-(G5)$ are satisfied, $g$ is odd in $u$, then $(\ref{NSE})$ has infinitely many pairs $\pm u$ of geometrically distinct solutions in $H^2_{loc}(\R^N)\cap L^t(\R^N)$ for $\mu\leq t\leq 2^*$.
\end{Th}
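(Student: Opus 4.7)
The plan is to build on the Nehari--Pankov reduction developed for Theorem~\ref{Th1} and combine it with a $\Z_2\times\Z^N$-equivariant minimax argument, where the $\Z_2$-action comes from oddness of $g$ and the $\Z^N$-action from translations by the period lattice. Writing $E_{2,\mu}=E^+\oplus E^-$ for the decomposition associated with the positive/negative part of the quadratic form of $-\Delta+V$, the Nehari--Pankov construction yields, for each $u\in E^+\setminus\{0\}$, a unique element $m(u)=t(u)u+\varphi(u)\in\mathcal{N}$ with $t(u)>0$ and $\varphi(u)\in E^-$, maximizing $\J$ on the half-subspace $\R_+ u\oplus E^-$. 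Condition $(G5)$ together with the oddness of $g$ ensures that $m$ restricts to an odd homeomorphism between the unit sphere $S\subset E^+$ and $\mathcal{N}$; hence $\Psi:=\J\circ m\colon S\to\R$ is an even, $\Z^N$-invariant $C^1$ functional, whose critical points correspond (modulo sign) to nontrivial critical points of $\J$ on $\mathcal{N}$.

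Next I would introduce the minimax values
$$c_k:=\inf_{A\in\Gamma_k}\sup_{u\in A}\Psi(u),$$
where $\Gamma_k$ denotes the family of closed symmetric subsets $A\subset S$ with Krasnoselskii genus $\gamma(A)\geq k$, and show that $c_1=\inf_{\mathcal{N}}\J>0$ and $c_k\to\infty$. Finiteness of each $c_k$ is ensured by testing against $S\cap F$ for a $k$-dimensional symmetric subspace $F\subset E^+$: $(G4)$ forces $\J\to-\infty$ along $F\oplus E^-$ at infinity, so $t(u)$ and hence $\Psi(u)$ stay bounded on $S\cap F$. Unboundedness of $(c_k)$ then follows from the standard observation that under $(G2)$--$(G4)$ every sub-level set $\{\Psi\leq M\}$ has finite Krasnoselskii genus.

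The heart of the argument is a deformation lemma paired with a compactness result modulo $\Z^N$-translations. One needs a splitting lemma in the spirit of Coti~Zelati--Rabinowitz: every bounded Cerami sequence $(u_n)\subset\mathcal{N}$ for $\J$ at level $c$ decomposes, up to subsequences, as $u_n\approx\sum_{i=1}^l w_i(\cdot-y_n^i)$, where the $w_i$ are nontrivial critical points of $\J$, $y_n^i\in\Z^N$, $|y_n^i-y_n^j|\to\infty$ for $i\neq j$, and $\sum_{i=1}^l \J(w_i)=c$. Assuming for contradiction that $\J$ admits only finitely many $\Z^N$-orbits of nontrivial critical points, one combines the splitting with the equivariant deformation framework of Bartsch--Ding based on $(PS)_I$-attractors and with the genus sub-additivity to produce a uniform upper bound on the sequence $(c_k)$, contradicting $c_k\to\infty$. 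Consequently, $\J$ must have infinitely many geometrically distinct nontrivial critical orbits, and each representative lies in $H^2_{loc}(\R^N)\cap L^t(\R^N)$ for $\mu\leq t\leq 2^*$ by the regularity argument already used for Theorem~\ref{Th1}.

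The main obstacle is proving the splitting lemma in the non-standard space $E_{2,\mu}$ introduced in Section~\ref{SectionVar}. Since $0$ lies at the boundary of the spectrum of $-\Delta+V$, the negative part $E^-$ is not a Hilbert space under the natural inner product and the quadratic form is only semi-definite there, so the standard $H^1$-based Brezis--Lieb and profile-decomposition arguments have to be redone in the norm of $E_{2,\mu}$. Controlling the nonlinear term under the weak growth hypotheses $(G2)$--$(G3)$ (in particular without the Ambrosetti--Rabinowitz condition), together with the boundedness of Cerami sequences that feed the splitting, is where the bulk of the technical work lies; once the splitting and the associated equivariant deformation lemma are in place, the multiplicity statement follows from the genus-based minimax scheme along standard lines.
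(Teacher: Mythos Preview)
Your overall architecture---reduce to the even functional $\Psi=\J\circ m$ on the unit sphere in $E^+$, then run a $\Z_2\times\Z^N$-equivariant genus argument---matches the paper, but the compactness substitute you propose is genuinely different and heavier than what the paper actually does. The paper does \emph{not} prove a profile decomposition or invoke the $(PS)_I$-attractor machinery of Bartsch--Ding. Instead it follows Szulkin--Weth: the key technical step is a \emph{discreteness lemma for Palais--Smale sequences} (Lemma~\ref{LemDiscretePS}), stating that two $(PS)$-sequences for $\Psi$ in a fixed sublevel set either merge or stay a definite positive distance apart, that distance being controlled by $\inf\{\|u_1-u_2\|:\Psi'(u_i)=0,\ u_1\neq u_2\}$. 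Under the hypothesis that there are only finitely many $\Z^N$-orbits of critical points this infimum is positive, and the discreteness then feeds directly into the Szulkin--Weth deformation scheme to produce critical values $c_k=\inf\{d:\gamma(\Psi^d)\geq k\}$ with $c_k<c_{k+1}$, yielding infinitely many distinct critical values and hence a contradiction. No splitting lemma in $E_{2,\mu}$ is needed, and the delicate issue you flag (profile decomposition in a space where the negative part is not Hilbert) is entirely bypassed.

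Two specific points in your sketch look shaky. First, the assertion that ``every sublevel set $\{\Psi\leq M\}$ has finite Krasnoselskii genus'' is not a standard observation in this non-compact, translation-invariant setting; coercivity of $\J$ on $\mathcal{N}$ only bounds $\|m(u)\|$, which says nothing about the genus of a subset of the infinite-dimensional sphere $S^+$. Second, your contradiction mechanism---that finitely many critical orbits together with the splitting would force a \emph{uniform upper bound} on $(c_k)$---does not follow: the splitting only tells you that $(PS)$-levels are finite sums of critical values, and such sums are unbounded. The Szulkin--Weth route avoids both issues by deriving the contradiction from $c_k<c_{k+1}$ (infinitely many critical \emph{values}) rather than from unboundedness of $(c_k)$.
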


The paper is organized as follows. In the next section we formulate a variational approach to $(\ref{NSE})$ and
we define a function space $E_{2,\mu}$ such that the energy functional $\J:E_{2,\mu}\to\R$ associated with $(\ref{NSE})$ is a well-defined $C^1$-map. Moreover some embeddings results of $E_{2,\mu}$ are established. In Section \ref{SectionAbstactSetting} we recall the recently obtained critical point theory from \cite{BartschMederski} which allows to deal with the underlying geometry of $\J$. Next, in Section \ref{SectionGroundState}, we introduce the Nehari-Pankov manifold $\mathcal{N}\subset E_{2,\mu}$ on which we minimize $\J$ to find a ground state and we prove Theorem \ref{Th1}. Finally, in the last Section \ref{SectionMultiplicity}, the multiplicity result is obtained.

\section{Variational setting}\label{SectionVar}

Let $H^1(\R^N)$ denote the Sobolev space with the norm $\|\cdot\|_{H^1}$.
Let us consider a functional $\J:H^1(\R^N)\to\R$  given by formula (\ref{eqJFormula}). We note that $\J$ is of class $C^1$ and its critical points correspond to solutions to (\ref{NSE}). 
By assumption $(V)$, $H^1(\R^N)$ has the decomposition of the form $E^+\oplus E'$ corresponding to the decomposition of spectrum of $\sigma(S)$ into $\sigma(S)\cap[\beta,\infty)$ and
$\sigma(S)\cap(-\infty,0]$, where $S:=-\Delta+V$ with domain $\D(S)=H^2(\R^N)$. We can define a new norm $\|\cdot\|_E$ on $E^{+}$ (resp. $E'$) by setting
$$\|u^{+}\|_E^2:=\int_{\R^N}|\nabla u^+|^2+V(x)|u^+|^2\;dx$$
and 
$$\|u'\|^2_E:=-\int_{\R^N}|\nabla u'|^2+V(x)|u'|^2\;dx$$
for $u^+\in E^+$ and $u'\in E'$.
Then $\|\cdot \|_E$ is equivalent to $\|\cdot\|_{H^1}$ on $E^+$ and
is weaker than $\|\cdot\|_{H^1}$ on $E'$ (see \cite{BartschDingPeriodic}). Let $E$ be the completion of $H^1(\R^N)$ with respect to $\|\cdot\|_E$. Then $H^1(\R^N)=E^+\oplus E'$ is continuously embedded in $E$ and $E$ is a Hilbert space with the inner product
$\langle u,v\rangle_E:=\langle |S|^{\frac{1}{2}}u,|S|^{\frac{1}{2}}v\rangle_{L^2}$, where $\langle\cdot,\cdot\rangle_{L^2}$ is the usual inner product in $L^2(\R^N)$.
Note that $\J$ can be written as follows
$$\J(u)=\frac{1}{2}(\|u^+\|_E^2-\|u'\|_E^2)-\int_{\R^N}G(x,u)\;dx=\frac{1}{2}\|u^+\|^2_E-I(u),$$
where 
$$I(u):=\frac{1}{2}\|u'\|_E^2+\int_{\R^N}G(x,u)\;dx$$
for any $u=u^++u'\in E^+\oplus E'$. We do not know if $\J$ has critical points in $H^1(\R^N)$. Moreover $I$ is not defined on $E$ owing to our assumptions on $g(x,u)$. Therefore we are going to define a space $E_{2,\mu}$ such that there are continuous embeddings
$$H^1(\R^N)\subset E_{2,\mu}\subset E,$$ 
$I$ is well-defined on $E_{2,\mu}$ and $\J$ admits critical points on $E_{2,\mu}$.

\subsection{Function space}

Let $(P_{\lambda}:L^2(\R^N)\to L^2(\R^N))_{\lambda\in\R}$ denote the spectral family of $S$. Let $L':=P_0(L^2(\R^N))$ and $L^+:=(\id- P_0)(L^2(\R^N))$. Then we have the orthogonal decomposition $L^2(\R^N)=L^+\oplus L'$ and then $E^+=H^1(\R^N)\cap L^+$, $E'=H^1(\R^N)\cap L'$ (see \cite{ReedSimon,PankovNotes}). Moreover
$$\|u\|_E^2=\int_{-\infty}^\infty |\lambda|\; d|P_{\lambda}u|_2^2,$$
where here and in the sequel, $|\cdot|_k$ denotes the usual norm in $L^k(\R^N)$ for any $k\geq 1$.\\
\indent Let us assume that $2\leq \nu\leq\mu$. By
$L^{\nu,\mu}(\R^N):=L^\nu(\R^N)+L^\mu(\R^N)$ we denote the Banach space of all functions of the form $v=v_1+v_2$, where $v_1\in L^\nu(\R^N)$ and $v_2\in L^\mu(\R^N)$,
endowed with the following norm
$$|v|_{\nu,\mu}:=\inf\{|v_1|_{\nu}+|v_2|_\mu|\;v=v_1+v_2\}.$$
By \cite{BadialePisaniRolando}[Prop. 2.5] the infimum in $|\cdot|_{\nu,\mu}$ is attained. Moreover, there is a continuous embedding 
$$L^t(\R^N)\subset L^{\nu,\mu}(\R^N)$$
for any $\nu\leq t\leq\mu$ and, if $\nu=\mu$ then norms $|\cdot|_{\nu,\mu}$ and $|\cdot|_{\mu}$ are equivalent.
Let
$E_{\nu,\mu}'$ and $E_{\mu}'$ be the completions of $E'$ with respect to the norms
$$\|\cdot\|_{\nu,\mu}=(\|\cdot\|_E^2+|\cdot |_{\nu,\mu}^2)^{\frac{1}{2}},$$
and
$$\|\cdot\|_{\mu}=(\|\cdot\|_E^2+|\cdot |_{\mu}^2)^{\frac{1}{2}}$$
respectively.
Thus we have
the following continuous embeddings
$$E'\subset E'_{\mu}\subset E'_{\nu,\mu}\subset E.$$ 
Space $E'_{\mu}$ has been introduced in \cite{BartschDingPeriodic} and note that, if $\nu=\mu$ then $E'_{\nu,\mu}=E'_{\mu}$ and the norms $\|\cdot\|_{\nu,\mu}$ and $\|\cdot\|_{\mu}$ are equivalent. In our setting, space $E'_{\nu,\mu}$ with $\nu=2$ plays an important role because of superlinear growth conditions  $(G3)$ and $(G4)$ (cf. Lemma \ref{LemG3estimate}).
The following somewhat surprising observation is crucial for continuous embeddings of $E'_{\nu,\mu}$ into $L^t(\R^N)$ (see Lemma \ref{LemContEmbed}).

\begin{Lem}\label{LemIdentification}
$E'_{\nu,\mu}=E'_{\mu}$ and norms $\|\cdot\|_{\nu,\mu}$, $\|\cdot\|_{\mu}$ are equivalent for any $2\leq \nu\leq \mu \leq 2^*$.
\end{Lem}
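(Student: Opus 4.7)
The easy direction $\|u\|_{\nu,\mu}\le\|u\|_{\mu}$ on $E'$ is immediate: the trivial choice $v_1=0$, $v_2=u$ in the infimum defining $|u|_{\nu,\mu}$ yields $|u|_{\nu,\mu}\le|u|_{\mu}$, and hence a continuous injection $E'_{\mu}\hookrightarrow E'_{\nu,\mu}$ of completions. What remains, and is the real content of the lemma, is the reverse bound
\[
|u|_{\mu}\le C\bigl(\|u\|_E+|u|_{\nu,\mu}\bigr)\qquad(u\in E').
\]
Once established on the dense subspace $E'$, this identifies the two completions.

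My plan combines two ingredients. First, a \emph{reverse Sobolev} estimate on $E'$: since $E'=H^1(\R^N)\cap L'$ and $\sigma(S)|_{L'}\subset(-\infty,0]$, the identity $\|u\|_E^2=-\int_{\R^N}(|\nabla u|^2+V|u|^2)\,dx\ge0$ forces $|\nabla u|_2^2\le|V|_{\infty}|u|_2^2$, hence $\|u\|_{H^1}\le C|u|_2$; composed with Sobolev $H^1(\R^N)\hookrightarrow L^{t}(\R^N)$ for $t\in[2,2^*]$ this gives $|u|_t\le C|u|_2$ uniformly on $E'$. Second, for $u\in E'$ take an optimal decomposition $u=v_1+v_2$, $v_1\in L^{\nu}$, $v_2\in L^{\mu}$, attaining $|u|_{\nu,\mu}=|v_1|_\nu+|v_2|_\mu$ (infimum attained by \cite{BadialePisaniRolando}, Prop.~2.5). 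Applying the orthogonal spectral projection $Q:=P_0$ onto $L'$ and using $Qu=u$, write $u=Qv_1+Qv_2$; in the crucial case $\nu=2$, $L^2$-orthogonality of $Q$ combined with the reverse Sobolev bound yields $|Qv_1|_\mu\le C|Qv_1|_2\le C|v_1|_\nu$, while $L^\mu$-continuity of $Q$ gives $|Qv_2|_\mu\le C|v_2|_\mu$; summing produces the desired estimate.

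The main obstacle is the $L^{\mu}$-boundedness of the spectral projector $Q$ for $\mu\in(2,2^*]$, a non-trivial spectral-multiplier fact for the periodic Schr\"odinger operator $S$. An alternative, more elementary route is to use the equivalent level-set representation $|u|_{\nu,\mu}\sim|u\chi_{\{|u|>1\}}|_\nu+|u\chi_{\{|u|\le1\}}|_\mu$ of the sum norm and to split $|u|_{\mu}^{\mu}$ accordingly: on $\{|u|\le1\}$ one has $|u|^{\mu}\le|u|^{\nu}$ (since $\nu\le\mu$), which controls the small-value contribution to $|u|_{\mu}$ directly in terms of $|u\chi_{\{|u|\le1\}}|_\mu$; the large-value contribution is then handled by the reverse Sobolev bound together with a truncation argument exploiting $u\chi_{\{|u|>1\}}\in L^{\nu}$. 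This bypasses spectral multiplier theory at the price of a slightly more technical estimate, and also covers the general range $2\le\nu\le\mu\le2^*$.
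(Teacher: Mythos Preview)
Your easy direction is fine, and the reverse-Sobolev observation on $E'$ (that $\|u\|_{H^1}\le C|u|_2$ because the quadratic form $\langle Su,u\rangle$ is nonpositive there) is correct. The genuine gap is that both of your routes ultimately rely on a uniform bound $|u|_2\le C(\|u\|_E+|u|_{\nu,\mu})$ for $u\in E'$, and this is \emph{false}. Since $0\in\sigma(S)$, functions $u\in E'$ spectrally concentrated in $[-\eps,0]$ satisfy $\|u\|_E^2\le\eps|u|_2^2$; by letting such a band-edge packet spread out in space one can also force $|u|_\mu\ll|u|_2$ when $\mu>2$. Thus $E'_\mu\not\subset L^2(\R^N)$ in general --- consistent with the fact that the embeddings one eventually obtains are $E'_\mu\hookrightarrow L^t$ only for $t\ge\mu$. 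Your first route therefore genuinely needs the $L^\mu$-boundedness of $P_0$, a spectral-multiplier statement for periodic Schr\"odinger operators that you rightly flag but cannot simply invoke (and which would in any case also be needed just to \emph{define} $Qv_2$ when $v_2\in L^\mu\setminus L^2$). Your second route is too vague at the critical step: truncating $u$ at level $1$ destroys membership in $E'$, so the reverse Sobolev bound no longer applies to the pieces, and there is no way to control $\int_{\{|u|>1\}}|u|^\mu$ from $|u\chi_{\{|u|>1\}}|_\nu$ alone without an $L^t$ bound for some $t\ge\mu$.

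The paper bypasses global $L^2$ control entirely. From $|Su|_2^2\le|\alpha|\,\|u\|_E^2$ on $E'$ (the spectral half of your observation) together with the \emph{local} Calderon--Zygmund inequality, one gets
\[
\|u\|_{H^2(B(y,r))}\le C\bigl(|u|_{L^2(B(y,r+\eps))}+|Su|_{L^2(\R^N)}\bigr),
\]
and the local $L^2$ term is dominated by the local $L^\nu$ and $L^\mu$ norms of an optimal splitting $u=u_1+u_2$ since the ball has finite measure and $\nu\ge2$. Raising to the power $2^*$, writing $|u_1|_{L^\nu(B)}^{2^*}\le|u_1|_\nu^{2^*-\nu}\int_B|u_1|^\nu$ (and similarly for $u_2$, $Su$), and summing over a bounded-overlap covering of $\R^N$ yields the \emph{global} bound $|u|_{2^*}^{2^*}\le C(|u_1|_\nu^{2^*}+|u_2|_\mu^{2^*}+|Su|_2^{2^*})$. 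Interpolation between $L^\nu$ and $L^{2^*}$ on the finite-measure set $\{|u|>1\}$ then gives $u\in L^\mu(\R^N)$. The local-to-global covering step, which trades the unavailable global $L^2$ norm for controllable local ones, is the idea your sketch is missing.
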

\begin{proof}
Note that it is enough to show the inclusion $E_{\nu,\mu}'\subset E_{\mu}'$.
Let $u\in E_{\nu,\mu}'$ and we proceed as follows.\\
{\em Step 1.} For any $y\in\R^N$, $r>0$ and $\eps>0$ we have $u\in H^2(B(y,r))$ and
\begin{equation}\label{eqCalderonZygmund}
\|u\|_{H^2(B(y,r))}\leq c(|u|_{L^2(B(y,r+\eps))}+|Su|_{L^2(B(y,r+\eps))})
\end{equation}
for some constant $c>0$ depending on $r$ and $\eps$.\\
Indeed, similarly as in proof of \cite{BartschDingPeriodic}[Lem 2.1], take a sequence $(u_n)_{n\in \N}\subset E'$ such that $\|u_n-u\|_{\nu,\mu}\to 0$ as $n\to\infty$.  
Note that $E'\subset L'\subset \D(S)=H^2(\R^N)$ because the spectrum of $S$ is bounded below. Since
\begin{eqnarray*}
|S(u_n-u_m)|_2^2&=&\int_{-\infty}^0\lambda^2 \;d |P_{\lambda}(u_n-u_m)|^2_2\\
&\leq& \alpha \int_{\alpha}^0\lambda \;d |P_{\lambda}(u_n-u_m)|_2^2\\
&=&-\alpha \|u_n-u_m\|_E^2,
\end{eqnarray*}
where $\alpha<\inf\sigma(S)<0$,
then  $Su_n$ is a Cauchy sequence in $L^2(\R^N)$. Since  $u_n\to u$ in $L^{\nu,\mu}(\R^N)$ then, by \cite{BadialePisaniRolando}[Prop. 2.14], $u_n\to u$ in
$L^{\nu}(\Omega)$ for any bounded and measurable $\Omega\subset\R^N$, hence the convergence holds in $L^{2}(\Omega)$ as well.
In view of the Calderon-Zygmund inequality (see \cite{GilbargTrudinger}[Th. 9.11]) there is a constant $c>0$ such that
$$\|u_n-u_m\|_{H^2(B(y,r))}\leq c(|u_n-u_m|_{L^2(B(y,r+\eps))}+|S(u_n-u_m)|_{L^2(B(y,r+\eps))}).$$
Thus $u\in H^2(B(y,r))$ and again by the Calderon-Zygmund inequality (\ref{eqCalderonZygmund}) holds.\\
{\em Step 2.}  
\begin{equation}\label{eqUinL2star}
u\in L^{2^*}(\R^N).
\end{equation}
In view of \cite{BadialePisaniRolando}[Prop. 2.5]
there are $u_1\in L^{\nu}(\R^N)$ and $u_2\in L^{\mu}(\R^N)$
such that $u=u_1+u_2$ and
\begin{equation*}\label{eqNormDecomp}
|u|_{\nu,\mu,2^*}^{2^*}=|u_1|_{\nu}^{2^*}+|u_2|_{\mu}^{2^*}
\end{equation*}
where 
$$|v|_{\nu,\mu,k}=(\inf\{|v_1|_{\nu}^k+|v_2|_\mu^k|\;v=v_1+v_2,\;v_1\in L^{\nu}(\R^N),\;v_2\in L^{\mu}(\R^N)\})^{\frac{1}{k}}$$
defines a family of equivalent norms on $L^{\nu,\mu}(\R^N)$ for $k\geq 1$ (see also \cite{BadialePisaniRolando}[Prop. 2.4]).
Observe that from (\ref{eqCalderonZygmund}),  for any $y\in\R^N$, $r>0$ and $\eps>0$
\begin{eqnarray*}
|u|_{L^{2^*}(B(y,r))}&\leq& c_1(|u|_{L^{2}(B(y,r+\eps))}+|Su|_{L^2(B(y,r+\eps))})\\
&\leq& c_1(|u_1|_{L^{2}(B(y,r+\eps))}+|u_2|_{L^{2}(B(y,r+\eps))}+|Su|_{L^2(B(y,r+\eps))})\\
&\leq& c_2(|u_1|_{L^{\nu}(B(y,r+\eps))}+|u_2|_{L^{\mu}(B(y,r+\eps))}+|Su|_{L^2(B(y,r+\eps))}),
\end{eqnarray*}
for some constants $c_1,c_2>0$ depending on $r$ and $\eps$.
Therefore
\begin{eqnarray*}
\int_{B(y,r)}|u|^{2^*}\;dx&\leq&
c_3\Big(|u_1|^{2^*-\nu}_{\nu}
\int_{B(y,r+\eps)}|u_1|^{\nu}\;dx
+|u_2|^{2^*-\mu}_{\mu}
\int_{B(y,r+\eps)}|u_2|^{\mu}\;dx\\
&&+|Su|^{2^*-2}_{2}
\int_{B(y,r+\eps)}|Su|^{2}\;dx\Big) 
\end{eqnarray*}
for some constant $c_3>0$.
For any $r>0$ there is $\eps>0$ and a covering of $\R^N$ by balls $\{B(y,r)\}_{y\in Y}$, where $Y\subset \R^N$ such that each point of $\R^N$ is contained in at most $N+1$ balls $B(y,r+\eps)$. Therefore 
\begin{eqnarray*}
\int_{\R^N}|u|^{2^*}\;dx&\leq&
(N+1)c_3\Big(|u_1|^{2^*}_{\nu}
+|u_2|^{2^*}_{\mu}
+|Su|^{2^*}_{2}\Big)\\
&=&
(N+1)c_3\Big(|u|^{2^*}_{\nu,\mu,2^*}
+|Su|^{2^*}_{2}\Big).
\end{eqnarray*}
Since norms $|\cdot|_{\nu,\mu,2^*}$ and $|\cdot|_{\nu,\mu,1}=|\cdot|_{\nu,\mu}$ are equivalent, then $u\in L^{2^*}(\R^N)$.\\
{\em Step 3.} $u\in E_{\mu}(\R^N)$.\\
Indeed, since $u\in L^{\nu,\mu}(\R^N)$ then by \cite{BadialePisaniRolando}[Prop. 2.3] we obtain
$u\in L^{\nu}(\Omega_u)\cap L^{\mu}(\Omega_u^c)$, where 
\begin{equation*}
\Omega_u:=\{x\in\R^N|\; |u(x)|>1\}
\end{equation*}
has finite Lebesgue measure. Since $u\in L^{2^*}(\Omega_u)$ then by the interpolation inequality we get $u\in L^{\mu}(\Omega_u)$. Hence $u\in L^{\mu}(\R^N)$ and $u\in E_{\mu}(\R^N)$.
\end{proof}

From (\ref{eqCalderonZygmund}) and (\ref{eqUinL2star}) or by \cite{BartschDingPeriodic}[Lem. 2.1] we infer the following embeddings.

\begin{Lem}\label{LemContEmbed}
If $2\leq \nu\leq \mu\leq 2^*$ then $E_{\nu,\mu}'$ embeds continuously into $H^2_{loc}(\R^N)$ and  $L^t(\R^N)$ for $\mu\leq t\leq 2^*$,
and compactly into $L^t_{loc}(\R^N)$ 
for $2\leq t< 2^*$.
\end{Lem}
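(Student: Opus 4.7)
The plan is to reduce the three embedding claims to the case $\nu=\mu$ via Lemma \ref{LemIdentification}, and then read off each statement from estimates that were already produced along the way.

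First, since Lemma \ref{LemIdentification} gives $E'_{\nu,\mu}=E'_{\mu}$ with equivalent norms, it is enough to establish all three embeddings for $u\in E'_{\mu}$, or equivalently to keep estimates in terms of $\|u\|_{\nu,\mu}$ using the equivalence. For the continuous embedding into $H^2_{loc}(\R^N)$, I would fix $y\in\R^N$, $r>0$, $\eps>0$ and invoke the Calder\'on--Zygmund inequality (\ref{eqCalderonZygmund}). Given $u\in E'_{\nu,\mu}$, Step 1 in the proof of Lemma \ref{LemIdentification} shows $Su\in L^2(\R^N)$ with $|Su|_2\leq C\|u\|_E$ (using $\alpha<\inf\sigma(S)<0$ in the spectral integral), while $|u|_{L^2(B(y,r+\eps))}$ is controlled by $|u|_{\mu}$ since $\mu\geq 2$ and the ball has finite measure. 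Combining these yields $\|u\|_{H^2(B(y,r))}\leq c_{r,\eps}\|u\|_{\mu}$, which is the desired continuous embedding.

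For the embedding into $L^t(\R^N)$ with $\mu\leq t\leq 2^*$, the endpoint $t=\mu$ is immediate from the definition of $\|\cdot\|_{\mu}$, while the endpoint $t=2^*$ is exactly Step 2 of the proof of Lemma \ref{LemIdentification}, which gives $|u|_{2^*}\leq C\|u\|_{\mu}$. For intermediate $t$ I would interpolate: choose $\theta\in[0,1]$ with $\tfrac{1}{t}=\tfrac{1-\theta}{\mu}+\tfrac{\theta}{2^*}$ and apply the standard $L^p$ interpolation inequality to get $|u|_t\leq |u|_{\mu}^{1-\theta}|u|_{2^*}^{\theta}\leq C\|u\|_{\mu}$.

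For the compact embedding into $L^t_{loc}(\R^N)$ with $2\leq t<2^*$, I would combine the continuous embedding $E'_{\nu,\mu}\hookrightarrow H^2_{loc}(\R^N)$ just proved with the Rellich--Kondrachov theorem on each ball $B(y,r)$: the embedding $H^2(B(y,r))\hookrightarrow L^t(B(y,r))$ is compact for every $t<2^*$ (it already is at the level of $H^1$). Hence any bounded sequence $(u_n)\subset E'_{\nu,\mu}$ is bounded in $H^2(B(y,r))$, and a diagonal extraction yields a subsequence converging in $L^t(B(y,r))$ for every $r$, which is the definition of $L^t_{loc}$ convergence.

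I do not expect genuine difficulty here: the three assertions are essentially a bookkeeping rearrangement of the work already done in Lemma \ref{LemIdentification}. The only ingredient not already present is the classical Rellich--Kondrachov compactness needed for the last clause; the only mildly delicate point is keeping the constants uniform in $y$ when passing from $H^2_{loc}$ bounds to a genuine $L^t_{loc}$ statement, which is handled ball-by-ball.
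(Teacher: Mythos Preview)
Your proposal is correct and follows essentially the same route as the paper: the paper's own proof is a single line that cites (\ref{eqCalderonZygmund}) and (\ref{eqUinL2star}) from Lemma \ref{LemIdentification} (or alternatively \cite{BartschDingPeriodic}[Lem.~2.1]), and your write-up is precisely the natural unpacking of those citations --- Calder\'on--Zygmund plus the $|Su|_2\le C\|u\|_E$ bound for the $H^2_{loc}$ embedding, the $L^{2^*}$ estimate of Step~2 together with interpolation for the $L^t(\R^N)$ embeddings, and Rellich--Kondrachov for the local compactness.
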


Observe that we obtain continuous embeddings
$$H^1(\R^N)\subset E_{\nu,\mu}:=E^+\oplus E'_{\nu,\mu}\subset E$$ 
where $E_{\nu,\mu}$ is endowed with the norm
$$\|u\|:=(\|u^+\|^2_E+\|u'\|_{\nu,\mu}^2)^{\frac{1}{2}}$$
for $u=u^{+}+u'\in E^+\oplus E'_{\nu,\mu}$. Since $|\cdot|_{\nu,\mu}$ is uniformly convex (see \cite{BadialePisaniRolando}[Prop. 2.6]), then $E_{\nu,\mu}$ is reflexive and bounded sequences in 
$E_{\nu,\mu}$ are relatively weakly compact. In view of the Sobolev embeddings, Lemma \ref{LemContEmbed} holds also for $E_{\nu,\mu}$ and $\J:E_{\nu,\mu}\to\R$ given by (\ref{eqJFormula}) is a well-defined $C^1$-map.
Moreover from Lemma \ref{LemIdentification} and \cite{BartschDingPeriodic}[Cor. 2.3] we get that a solution to (\ref{NSE}) in $E_{\nu,\mu}$ vanishes at infinity.

\begin{Cor}\label{CorollaryUVanish}
If $u\in E_{\nu,\mu}$ solves $(\ref{NSE})$ then $u(x)\to 0$ as $|x|\to\infty$.
\end{Cor}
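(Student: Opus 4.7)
The plan is to reduce the statement to the corresponding vanishing result of Bartsch--Ding via the identification supplied by Lemma \ref{LemIdentification}. First, I would observe that Lemma \ref{LemIdentification} gives $E'_{\nu,\mu}=E'_{\mu}$ with equivalent norms, so any $u = u^{+}+u' \in E_{\nu,\mu} = E^{+}\oplus E'_{\nu,\mu}$ automatically lies in $E_{\mu}=E^{+}\oplus E'_{\mu}$. Moreover, the weak formulation of (\ref{NSE}) in $E_{\nu,\mu}$ coincides with the one in $E_{\mu}$: both spaces contain $H^1(\R^N)$ densely and embed continuously into $E$, and by Lemma \ref{LemContEmbed} together with $(G2)$ the nonlinear term $\int_{\R^N}g(x,u)\varphi\,dx$ defines a continuous functional on either space. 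Hence a solution in $E_{\nu,\mu}$ is a solution in $E_{\mu}$ in exactly the sense used in \cite{BartschDingPeriodic}.

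Second, I would invoke \cite{BartschDingPeriodic}[Cor. 2.3], which asserts that any solution of (\ref{NSE}) in $E_{\mu}$ vanishes at infinity, to conclude $u(x)\to 0$ as $|x|\to \infty$. For the reader's convenience one could briefly indicate the mechanism: a local Calderón--Zygmund estimate of the type (\ref{eqCalderonZygmund}), applied to the equation $Su = g(x,u)$, combined with the subcritical growth in $(G2)$ and the global integrability $u\in L^{t}(\R^N)$ for $\mu\le t\le 2^{*}$ from Lemma \ref{LemContEmbed}, yields via Moser iteration a uniform estimate of the form $\|u\|_{L^\infty(B(y,1))}\le C\,\Phi\bigl(|u|_{L^t(B(y,2))}\bigr)$ where $\Phi(s)\to 0$ as $s\to 0$; since $u\in L^t(\R^N)$ the right-hand side tends to $0$ as $|y|\to\infty$.

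The main (and essentially only) obstacle is the bookkeeping of verifying that the change of ambient space $E_{\nu,\mu}\leadsto E_{\mu}$ does not alter the equation; this is precisely where Lemma \ref{LemIdentification} enters, and once that lemma is established the corollary is immediate. No new critical-point-theoretic or functional-analytic input beyond Lemmas \ref{LemIdentification}--\ref{LemContEmbed} and the cited result of Bartsch--Ding is required.
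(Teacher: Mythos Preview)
Your proposal is correct and follows exactly the paper's own route: the paper simply notes that the corollary is an immediate consequence of Lemma \ref{LemIdentification} (giving $E_{\nu,\mu}=E_{\mu}$) together with \cite{BartschDingPeriodic}[Cor.~2.3]. Your additional remarks about the coincidence of the weak formulations and the Moser-iteration mechanism behind the Bartsch--Ding result are helpful elaborations but not required by the paper.
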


\section{Abstract setting}\label{SectionAbstactSetting}

In this section we are going 
recall the recent abstract result obtained in \cite{BartschMederski} which seems to be
appropriate in dealing with the geometry and the regularity of energy functional $\J$.\\
\indent For the purpose of this section we assume that $X$ is an arbitrary reflexive Banach space with norm $\|\cdot\|$ such that
$X=X^{+}\oplus X'$, $X^+$, $X'$ are closed subspaces of $X$ and $X^+\cap X'=\{0\}$. If $u\in X$ then there is the unique decomposition $u=u^++u'$ where $u^+\in X^{+}$ and $u'\in X'$. We may also assume that $\|u\|^2=\|u^+\|^2+\|u'\|^2$. 
In order to ensure that a unit sphere in $X^+$ 
$$S^+:=\{u\in X^+|\;\|u\|=1\}$$
is a $C^1$-submanifold of $X^+$, we assume that $X^+$ is a Hilbert space with the scalar product $\langle\cdot,\cdot\rangle$ such that $\langle u,u\rangle = \|u\|^2$ for any $u\in X^+$. In addition to the norm topology we need the topology $\cT$ on $X$ which is the product of the norm topology in $X^+$ and the weak topology in $X'$. In particular, $u_n\cTto u$ provided that $u_n^+ \to u^+$ and $u_n' \rightharpoonup u'$.\\
We define the following Nehari-Pankov manifold (cf. \cite{Pankov})
$$\mathcal{N}:=\{u\in X\setminus X'|\; \J'(u)(u)=0,\;\J'(u)(h')=0 \hbox{ for any } h'\in X'\}.$$
We say that $\J$ satisfies the $(PS)_c^\cT$-condition in $\mathcal{N}$ if every $(PS)_c$-sequence in $\mathcal{N}$ has a subsequence which converges in $\cT$:
\[
u_n \in \mathcal{N},\ \J'(u_n) \to 0,\ \J(u_n) \to c \qquad\Longrightarrow\qquad
u_n \cTto u\in X\ \text{ along a subsequence}.
\]

\begin{Th}[see \cite{BartschMederski}]\label{ThLink1}
Let $J\in C^1(X,\R)$ be a map of the form
\begin{equation}\label{EqJ}
\J(u)=\frac{1}{2}\|u^+\|^2-I(u)
\end{equation}
for any $u=u^++u'\in X^{+}\oplus X'$ such that:\\
$(J1)$ $I(u)\geq I(0)=0$ for any $u\in X$ and, $I$ is $\cT$-sequentially lower semicontinuous, i.e. if $u_n\cTto u_0$ then $\liminf_{n\to\infty}I(u_n)\geq I(u_0)$.\\
$(J2)$ If $u_n\cTto u_0$ and $I(u_n)\to I(u_0)$ then
$u_n\to u_0$.\\
$(J3)$ If $u\in \mathcal{N}$ then $\J(u)>\J(tu+h')$ for any $t\geq 0$, $h'\in X'$ such that $tu+h'\neq u$.\\
$(J4)$ $0<\inf_{u\in X^+,\;\|u\|=r}\J(u)$.\\
$(J5)$ $\|u^+\|+I(u)\to\infty$ as $\|u\|\to\infty$.\\
$(J6)$ $I(t_nu_n)/t_n^2\to\infty$ if $t_n\to\infty$  and $u_n^+\to u_0^+$ for some $u_0^+\neq 0$ as $n\to\infty$.\\
Then:\\
$(a)$ $c:=\inf_{\mathcal{N}}\J>0$ and there exists a $(PS)_c$-sequence $(u_n)_{n\in\N}\subset\mathcal{N}$, i.e. $\J(u_n)\to c$ and $\J'(u_n)\to0$ as $n\to\infty$. If $\J$ satisfies the $(PS)_{c}^\cT$-condition in $\mathcal{N}$ then $c$ is achieved by a critical point of $\J$.\\
$(b)$ There is a homeomorphism $n:S^+\to \mathcal{N}$ such that $n^{-1}(u)=\frac{u^+}{\|u^+\|}$, $n(u)$ is the unique maximum of $\J$ on $\R^+u\oplus X'$ for $u\in\mathcal{N}$ and $\J\circ n:S^+\to\R$ is of class $C^1$. Moreover, a sequence $(u_n)\subset S^+$ is a Palais-Smale sequence for $\J\circ n$ if and only if $n(u_n)\subset\mathcal{N}$ is a Palais-Smale sequence for $\J$, and $u\in S^+$ is a critical point of $\J\circ n$ if and only if $n(u)$ is a critical point of $\J$.
\end{Th}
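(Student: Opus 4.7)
The plan is to follow a Nehari--Pankov reduction adapted to the strongly indefinite setting, generalizing the Szulkin--Weth approach: construct a homeomorphism $n:S^+\to\mathcal{N}$ such that minimizing $\J$ on the complicated set $\mathcal{N}$ is equivalent to minimizing the better-behaved functional $\Psi:=\J\circ n$ on the $C^1$-Hilbert manifold $S^+$. The crux is that on each half-space $\R^+ u\oplus X'$ with $u\in X^+\setminus\{0\}$ the functional $\J$ attains a unique strict maximum, which automatically sits on $\mathcal{N}$.

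First I would establish this maximum. For fixed $u\in X^+\setminus\{0\}$, write $\J(tu+h')=\tfrac{t^2}{2}\|u\|^2-I(tu+h')$. Assumption (J4) makes the supremum strictly positive; (J5) together with (J6) forces $\J(tu+h')\to-\infty$ as $\|tu+h'\|\to\infty$ (if $t$ stays bounded, (J5) drives $I$ to $\infty$; if $t\to\infty$ with $tu/\|tu\|\to u/\|u\|\neq 0$, then (J6) makes $I/t^2\to\infty$). Hence any maximizing sequence is bounded, and by reflexivity plus the $\cT$-upper semicontinuity of $\J$ on this half-space (from (J1)) the maximum is attained; uniqueness is precisely (J3). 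This yields a map $m:X^+\setminus\{0\}\to\mathcal{N}$, and $n:=m|_{S^+}$ obviously satisfies $n^{-1}(v)=v^+/\|v^+\|$. Continuity of $n$ follows from a standard contradiction argument: a bad sequence $v_n\to v_0$ in $S^+$ with $\|n(v_n)-n(v_0)\|\not\to 0$ must, by (J5) and (J6), stay bounded; extract a $\cT$-limit, use (J3) together with the maximality to identify the limit with $n(v_0)$, and finally use (J2) to upgrade $\cT$-convergence to norm convergence.

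Next I would verify that $\Psi=\J\circ n$ is $C^1$ and deduce the Palais--Smale correspondence. For $u\in S^+$ and $w\in T_uS^+=\{w\in X^+:\langle w,u\rangle=0\}$, the computation at an interior maximizer of $\J$ on $\R^+u\oplus X'$ gives
\[
\Psi'(u)[w]=\|n(u)^+\|\,\J'(n(u))[w].
\]
Since $\J'(n(u))$ already vanishes on $\R u\oplus X'$ by the definition of $\mathcal{N}$, the norm of $\Psi'(u)$ and of $\J'(n(u))$ agree up to the factor $\|n(u)^+\|$, which stays bounded below on bounded subsets of $\mathcal{N}$. This yields the bijection between Palais--Smale sequences and between critical points. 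With this in hand, part (a) follows quickly: $c=\inf_{\mathcal{N}}\J=\inf_{S^+}\Psi>0$ by (J4), and Ekeland's variational principle on the complete $C^1$-manifold $S^+$ produces a minimizing $(PS)_c$-sequence $(v_n)\subset S^+$, whose image $u_n=n(v_n)\subset\mathcal{N}$ is a $(PS)_c$-sequence for $\J$. If the $(PS)_c^{\cT}$-condition holds in $\mathcal{N}$, then $u_n\cTto u$ along a subsequence; (J1) gives $\J(u)\leq c$, while $u\in\mathcal{N}$ (shown via passing to the limit in $\J'(u_n)$ tested against fixed elements of $X^+\oplus X'$) forces $\J(u)\geq c$, and the (J2)-type comparison completes the identification $\J(u)=c$ with $\J'(u)=0$.

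The main obstacle I expect is the continuity (indeed the homeomorphism property) of $n$ and the $C^1$-regularity of $\Psi$. The subtlety is that $\mathcal{N}$ is not a manifold a priori, so one cannot appeal to any implicit function machinery directly; instead one must leverage the fine interplay of (J2), (J3), (J5), (J6) to control both the $X^+$- and $X'$-components of $n(v_n)$ simultaneously under perturbations of $v$, and to rule out escape of mass under the mixed strong/weak topology $\cT$. A secondary difficulty is to verify that the abstract $(PS)_c^{\cT}$-limit actually lies in $\mathcal{N}$ rather than in $X'$: for this one uses (J4) to show $\|u_n^+\|$ stays bounded away from zero along the sequence, so that the $\cT$-limit $u$ satisfies $u^+\neq 0$.
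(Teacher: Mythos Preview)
Your approach is correct and is precisely the route the paper points to. Note, however, that the paper does not actually prove this theorem: it is quoted from \cite{BartschMederski}, and the only remark offered is that the proof ``is based on the Ekeland's variational principle applied to a map $\J\circ n:S^+\to\R$'', together with a cross-reference to \cite{SzulkinWethHandbook}. Your sketch is exactly this Nehari--Pankov reduction in the Szulkin--Weth spirit, so there is essentially nothing to compare.

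One small slip in your last paragraph: you write that ``(J1) gives $\J(u)\leq c$''. In fact (J1) makes $I$ $\cT$-sequentially lower semicontinuous, so $\J=\tfrac12\|u^+\|^2-I$ is $\cT$-sequentially \emph{upper} semicontinuous along sequences with $u_n^+\to u^+$; this yields $\J(u)\geq c$, which is the same direction you already obtain from $u\in\mathcal{N}$. The clean way to close the argument is to reuse the continuity of $n$ that you have already established: from $u_n^+\to u^+\neq 0$ one gets $v_n:=u_n^+/\|u_n^+\|\to v:=u^+/\|u^+\|$ in $S^+$, hence $u_n=n(v_n)\to n(v)$ in norm; comparing with $u_n\cTto u$ forces $u=n(v)$, so $u_n\to u$ strongly and $\J(u)=c$, $\J'(u)=0$ follow at once.
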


Proof of Theorem \ref{ThLink1} is based on the Ekeland's variational applied to a map $\J\circ n:S^+\to\R$. Some steps of the proof are enlisted in $(b)$ since they play a crucial role in Section \ref{SectionMultiplicity} (see \cite{BartschMederski}, cf. \cite{SzulkinWethHandbook}).

\section{Ground state solutions}\label{SectionGroundState}

We are going to look for critical points of $\J:E_{2,\mu}\to\R$ on the following {\em Nehari-Pankov manifold}
\begin{equation}\label{NehariDef}
\mathcal{N}:=\{u\in E_{2,\mu}\setminus E'_{2,\mu}|\; \J'(u)(u)=0,\;\J'(u)(h')=0 \hbox{ for any }h'\in E'_{2,\mu}\}.
\end{equation}
The idea to consider a Nehari-type manifold for indefinite problems was firstly observed by Pankov in \cite{Pankov}. If $\J\in C^2(E_{2,\mu},\R)$ and under some additional assumptions, $\mathcal{N}$ is a $C^1$-submanifold of $E_{2,\mu}$ (see \cite{Pankov,SzulkinWethHandbook}). However we assume only that $\J$ is of $C^1$-class and $\mathcal{N}$ does not have to be a $C^1$-submanifold of $E_{2,\mu}$.
In order to find a minimizing Palais-Smale sequence we need to check assumptions  $(J1)$ - $(J6)$ of Theorem \ref{ThLink1} by setting $X^+:=E^+$ and $X':=E_{2,\mu}'$. Firstly observe that the following inequality holds.

\begin{Lem}\label{LemG3estimate}
There is a constant $c>0$ such that
for any $u\in E_{2,\mu}$
\begin{equation}\label{eqG3estimate}
\int_{\R^N}G(x,u)\;dx\geq
c\min\{|u|_{2,\mu}^2,
|u|_{2,\mu}^\mu\}.
\end{equation}
\end{Lem}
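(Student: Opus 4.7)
The plan is to obtain a pointwise two-regime lower bound for $G$---quadratic for $|u|\geq 1$ and $\mu$-th power for $|u|\leq 1$---then split $u$ accordingly and reduce the claim to an elementary numerical inequality by invoking the decomposition of $L^{2,\mu}(\R^N)$ from \cite{BadialePisaniRolando}. The first task, which is also the main obstacle, is to establish the existence of a constant $c_1>0$ with $G(x,u)\geq c_1|u|^2$ for all $x\in\R^N$ and $|u|\geq 1$: indeed $(G4)$ only secures this asymptotically for $|u|$ large while $(G3)$ only covers $|u|\leq 1$, so one has to bridge the intermediate range $1\leq|u|\leq R$. To do this I would first note that by $(G2)$, $|g(x,s)/s|\leq a(|s|^{\mu-2}+|s|^{p-2})\to 0$ as $s\to 0$, and combined with the strict monotonicity of $s\mapsto g(x,s)/|s|$ from $(G5)$ this forces $g(x,s)/s>0$ for every $s\neq 0$, whence $G(x,s)>0$ for all $s\neq 0$. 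By $(G4)$ there is $R>1$ with $G(x,u)\geq|u|^2$ whenever $|u|\geq R$, uniformly in $x$; on the compact set $[0,1]^N\times\{t\in\R:1\leq|t|\leq R\}$ the continuous strictly positive function $G$ attains a positive minimum $m$; then periodicity in $x$ gives $G(x,u)\geq m\geq mR^{-2}|u|^2$ on $1\leq|u|\leq R$, yielding the bound with $c_1:=\min\{mR^{-2},1\}$.

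Next, for $u\in E_{2,\mu}$ I would split $u=v+w$ where $v:=u\mathbf{1}_{\{|u|>1\}}$ and $w:=u\mathbf{1}_{\{|u|\leq 1\}}$. By \cite{BadialePisaniRolando}[Prop.~2.3], $v\in L^2(\R^N)$, $w\in L^\mu(\R^N)$, and the norm $|\cdot|_{2,\mu}$ is equivalent to $|v|_2+|w|_\mu$; the bound $|u|_{2,\mu}\leq|v|_2+|w|_\mu$ is immediate from the definition of $|\cdot|_{2,\mu}$. Combining the pointwise bound above with $(G3)$,
\begin{equation*}
\int_{\R^N}G(x,u)\,dx\geq c_1\int_{\{|u|>1\}}|u|^2\,dx+b\int_{\{|u|\leq 1\}}|u|^\mu\,dx=c_1|v|_2^2+b|w|_\mu^\mu.
\end{equation*}

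The proof is then completed by verifying the elementary numerical inequality: for all $\alpha,\beta\geq 0$ and $s:=\alpha+\beta$,
\begin{equation*}
\alpha^2+\beta^\mu\geq 2^{-\mu}\min\{s^2,s^\mu\}.
\end{equation*}
If $\alpha\geq\beta$ then $\alpha\geq s/2$, so $\alpha^2\geq s^2/4\geq 2^{-\mu}\min\{s^2,s^\mu\}$; if $\beta\geq\alpha$ then $\beta\geq s/2$, so $\beta^\mu\geq(s/2)^\mu\geq 2^{-\mu}\min\{s^2,s^\mu\}$ (checking the subcases $s\leq 1$ and $s\geq 1$ separately). Applying this to $\alpha=c_1^{1/2}|v|_2$ and $\beta=b^{1/\mu}|w|_\mu$---so that $\alpha^2+\beta^\mu=c_1|v|_2^2+b|w|_\mu^\mu$ and $s\geq\min\{c_1^{1/2},b^{1/\mu}\}(|v|_2+|w|_\mu)$---and invoking the norm equivalence $|v|_2+|w|_\mu\asymp|u|_{2,\mu}$, we conclude $\int_{\R^N}G(x,u)\,dx\geq c\min\{|u|_{2,\mu}^2,|u|_{2,\mu}^\mu\}$ as desired.
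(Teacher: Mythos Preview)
Your proof is correct and follows essentially the same approach as the paper: both establish the pointwise bound $G(x,u)\geq b'\min\{|u|^2,|u|^\mu\}$ (the paper states this in one line, while you spell out the compactness-plus-periodicity argument), then split $u$ into $v=u\chi_{\{|u|>1\}}\in L^2$ and $w=u\chi_{\{|u|\leq1\}}\in L^\mu$. The only cosmetic difference is in the last step: to bound $|v|_2^2+|w|_\mu^\mu$ below by $c\min\{|u|_{2,\mu}^2,|u|_{2,\mu}^\mu\}$, the paper invokes the equivalent $\max$-type norm $|u|_{2,\mu,\infty}$ from \cite{BadialePisaniRolando}[Prop.~2.4], whereas you prove the elementary inequality $\alpha^2+\beta^\mu\geq 2^{-\mu}\min\{(\alpha+\beta)^2,(\alpha+\beta)^\mu\}$ directly and use only the definitional bound $|u|_{2,\mu}\leq|v|_2+|w|_\mu$.
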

\begin{proof}
Note that by $(G2)$ and $(G5)$ we know that $G(x,u)>0$ if  $u\neq 0$. Therefore $(G3)$ and $(G4)$ imply that
there is $b'>0$ such that
\begin{equation}
G(x,u)\geq b'\min\{|u|^{2},|u|^{\mu}\}
\end{equation}
for all $u\in\R$ and $x\in\R^N$.
Then we infer that for $u\in E_{2,\mu}$
\begin{eqnarray*}
\int_{\R^N}G(x,u)\;dx&\geq& b' \Big( \int_{\Omega_u}|u|^2\;dx+
\int_{\Omega_u^c}|u|^\mu\;dx\Big)\\
&=& b'(|u\chi_{\Omega_u}|_2^2 +|u\chi_{\Omega^c_u}|_\mu^\mu),
\end{eqnarray*}
where $\chi$ denoted the characteristic function and $$\Omega_u:=\{x\in\R^N|\; |u(x)|>1\}$$
is bounded.
In view of \cite{BadialePisaniRolando}[Prop. 2.4])
$$|u|_{2,\mu,\infty}:=\inf\{\max\{|u_1|_{2},|u_2|_\mu\}|\;u=u_1+u_2,\;u_1\in L^{2}(\R^N),\;u_2\in L^{\mu}(\R^N)\}$$
defines a norm on $L^{2,\mu}(\R^N)$ equivalent with $|\cdot|_{2,\mu}$.
Observe that  if $|u\chi_{\Omega_u}|_2\geq |u\chi_{\Omega_u^c}|_\mu$
then
$$|u\chi_{\Omega_u}|_2^2 +|u\chi_{\Omega^c_u}|_\mu^\mu\geq 
(\max\{|u\chi_{\Omega_u}|_2,|u\chi_{\Omega^c_u}|_\mu\})^2\geq |u|_{2,\mu,\infty}^2$$
and if $|u\chi_{\Omega_u}|_2< |u\chi_{\Omega_u^c}|_\mu$
then
$$|u\chi_{\Omega_u}|_2^2 +|u\chi_{\Omega^c_u}|_\mu^\mu\geq 
(\max\{|u\chi_{\Omega_u}|_2,|u\chi_{\Omega^c_u}|_\mu\})^\mu\geq |u|_{2,\mu,\infty}^\mu.$$
Therefore
\begin{eqnarray*}
\int_{\R^N}G(x,u)\;dx&\geq&  b' \min\{|u|_{2,\mu,\infty}^2,
|u|_{2,\mu,\infty}^\mu\}\nonumber\\
&\geq& c\min\{|u|_{2,\mu}^2,
|u|_{2,\mu}^\mu\},\nonumber
\end{eqnarray*}
for some constant $c>0$.
\end{proof}

The following lemma shows that $(J4)$ - $(J6)$ hold for $\J$.

\begin{Lem}\label{LemLGcondV}
The following conditions hold.\\
$(a)$ $0<\inf_{u\in E^+,\;\|u\|=r}\J(u)$.\\
$(b)$ $\|u^+\|+I(u)\to\infty$ as $\|u\|\to\infty$.\\
$(c)$ $I(t_nu_n)/t_n^2\to\infty$ if $u_n^+\to u_0^+$ for some $u^+_0\neq 0$ and $t_n\to\infty$ as $n\to\infty$.
\end{Lem}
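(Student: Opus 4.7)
I verify the three items in turn; only $(c)$ is substantive. For $(a)$, on $E^+$ one has $\J(u)=\tfrac12\|u\|_E^2-\int_{\R^N}G(x,u)\,dx$, and $(G2)$ combined with the Sobolev embedding $E^+\hookrightarrow H^1(\R^N)\hookrightarrow L^\mu(\R^N)\cap L^p(\R^N)$, $2<\mu\le p<2^*$, bounds $\int G(x,u)\,dx$ by $C_1\|u\|_E^\mu+C_2\|u\|_E^p$; a sufficiently small radius $r>0$ then yields a positive infimum on $\{u\in E^+:\|u\|=r\}$.

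For $(b)$, use $\|u\|^2=\|u^+\|_E^2+\|u'\|_E^2+|u'|_{2,\mu}^2$ and argue along a subsequence. If $\|u_n^+\|_E\to\infty$ or $\|u_n'\|_E\to\infty$ the conclusion is immediate (the latter because $I(u_n)\ge\tfrac12\|u_n'\|_E^2$). Otherwise $|u_n'|_{2,\mu}\to\infty$ with both $E$-norms bounded; since $|u_n^+|_{2,\mu}\le|u_n^+|_\mu\le C\|u_n^+\|_E$ is bounded, the triangle inequality yields $|u_n|_{2,\mu}\ge|u_n'|_{2,\mu}-|u_n^+|_{2,\mu}\to\infty$, and Lemma \ref{LemG3estimate} gives $\int G(x,u_n)\,dx\ge c|u_n|_{2,\mu}^2\to\infty$ (for $|u_n|_{2,\mu}>1$ the minimum in that lemma equals the square since $\mu>2$).

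For $(c)$, split $I(t_nu_n)/t_n^2=\tfrac12\|u_n'\|_E^2+t_n^{-2}\int G(x,t_nu_n)\,dx$. If $\|u_n'\|_E$ is unbounded along a subsequence we are done; if $\|u_n'\|_E$ stays bounded but $|u_n'|_{2,\mu}\to\infty$, then $|u_n|_{2,\mu}\to\infty$ as in $(b)$, and Lemma \ref{LemG3estimate} with $t_n\to\infty$ and $\mu>2$ yields $t_n^{-2}\int G(x,t_nu_n)\,dx\ge c\min\{|u_n|_{2,\mu}^2,\,t_n^{\mu-2}|u_n|_{2,\mu}^\mu\}\to\infty$. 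The substantive case is when $u_n'$ is bounded in the reflexive space $E'_{2,\mu}$: up to a subsequence $u_n'\rightharpoonup u_0'$, and the compact local embedding of Lemma \ref{LemContEmbed} together with $u_n^+\to u_0^+$ in $H^1(\R^N)$ produces $u_n(x)\to u_0(x):=u_0^+(x)+u_0'(x)$ a.e. The set $\Omega:=\{u_0\ne 0\}$ has positive measure, because $u_0\equiv 0$ would, via the uniqueness of the direct-sum decomposition $E_{2,\mu}=E^+\oplus E'_{2,\mu}$ inside $E$, force $u_0^+=0$, contradicting the hypothesis. On $\Omega$, $|t_nu_n(x)|\to\infty$, so by $(G4)$ and the positivity $G\ge 0$ (which is a standard consequence of $(G5)$ and $(G2)$) one has $G(x,t_nu_n)/t_n^2=|u_n|^2\cdot G(x,t_nu_n)/|t_nu_n|^2\to\infty$ pointwise on $\Omega$, and Fatou's lemma concludes. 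The main obstacle I anticipate is precisely eliminating the degenerate scenario $u_0\equiv 0$: preventing $u_n'$ from cancelling the nontrivial limit of $u_n^+$ pointwise rests on the directness of the decomposition $E=E^+\oplus\overline{E'}$ and on the continuous embedding $E_{2,\mu}\hookrightarrow E$.
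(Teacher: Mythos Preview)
Your argument is correct and follows the same route as the paper's proof in all three parts: $(a)$ via $(G2)$ and Sobolev embedding, $(b)$ via the norm decomposition and Lemma~\ref{LemG3estimate}, and $(c)$ via boundedness of $u_n'$, weak compactness, a.e.\ convergence, Fatou with $(G4)$, and the orthogonality $\langle u_0^+,u_0'\rangle_E=0$ to rule out $u_0\equiv 0$. The one point to tighten is the case organisation in $(c)$: the trichotomy you set up does not partition, and divergence of $I(t_nu_n)/t_n^2$ along one subsequence does not give divergence of the full sequence; the paper handles this by arguing by contradiction from the outset---assume $I(t_nu_n)/t_n^2$ is bounded along a subsequence, use Lemma~\ref{LemG3estimate} exactly as you do to deduce that $(\|u_n'\|_{2,\mu})$ is bounded on that subsequence, and then run your ``substantive case'' verbatim to reach the contradiction $u_0^+=0$.
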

\begin{proof}
$(a)$ If $u\in E^{+}$ then by $(G2)$
$$\J(u)\geq\frac{1}{2}\|u\|_E^2-\frac{a}{\mu}|u|^\mu_\mu
-\frac{a}{p}|u|^p_p.$$
Since $E^{+}$ is continuously embedded in $L^{\mu}(\R^N)$ and in $L^{p}(\R^N)$ then 
$$\J(u)\geq\frac{1}{2}\|u\|_E^2-C_1(\|u\|_E^{\mu}+\|u\|_E^p)$$
for some constant $C_1>0$.
Thus we get the inequality in $(a)$.\\
$(b)$ Suppose that $\|u_n\|\to\infty $ as $n\to\infty$ and $(\|u_n^{+}\|_E)_{n\in\N}$ is bounded. Then $(|u_n^{+}|_{2,\mu})_{n\in\N}$ is bounded and
$$\|u_n'\|^2_{2,\mu}=\|u_n'\|^2_{E}+|u_n'|^2_{2,\mu}\to\infty$$
as $n\to\infty$. If along a subsequence $\|u_n'\|_{E}\to\infty$ then obviously $I(u_n)\to\infty$. Assume that $(\|u_n'\|_E)_{n\in\N}$ is bounded.
Then $|u_n'|_{2,\mu}\to\infty$ and by (\ref{eqG3estimate}),
$I(u_n)\to\infty$ as $n\to\infty$.\\
$(c)$ Suppose that, up to a subsequence, $I(t_n u_n)/t_n^2$ is bounded, $u_n^+\to u_0^+$ for some $u_0^+\in E^{+}\setminus\{0\}$ and $t_n\to\infty$ as $n\to\infty$. Note that by (\ref{eqG3estimate})
\begin{equation*}
\frac{I(t_nu_n)}{t_n^2}\geq\frac{1}{2}\|u_n'\|^2_E+c\min\{|u_n|_{2,\mu}^2,t_n^{\mu-2}|u_n|_{2,\mu}^\mu\},
\end{equation*}
and then  $(\|u_n'\|_{2,\mu})_{n\in\N}$ is bounded. In view of Lemma \ref{LemContEmbed} we may assume that $u_n'\rightharpoonup u_0'$ in $E_{2,\mu}'$ and $u_n'(x)\to u'_0(x)$ a.e. on $\R^N$.
If the Lebesgue measure $|\Omega|>0$, where
$\Omega:=\{x\in\R^N|\;u^+_0(x)+u'_0(x)\neq 0\}$, then by $(G4)$ and Fatou's lemma
$$\int_{\R^N}\frac{G(x,t_nu_n)}{t_n^2}\;dx\to\infty.$$
Thus we obtain that $I(t_n u_n)/t_n^2\to\infty$ which is a contradiction. Therefore $|\Omega|=0$ and 
$u_0'=-u_0^+$ a.e. on $\R^N$.
Since $\langle u_0',u_0^+\rangle_E=0$ then $u_0^+=0$. The obtained contradiction implies that
$I(t_nu_n)/t_n^2\to\infty$.
\end{proof}

We recall that $u_n\cTto u_0$ provided that $u_n^+\to u_0^+$ in $E^+$ and $u_n'\rightharpoonup u_0'$ in $E_{2,\mu}'$ (see Section \ref{SectionAbstactSetting}).

\begin{Lem}\label{LemLG}
The following conditions hold.\\
$(a)$ $I(u)\geq 0$ for any $u\in E_{2,\mu}$ and $I$ is $\cT$-sequentially lower semicontinuous.\\
$(b)$ If $u_n\cTto u_0$ and $I(u_n)\to I(u_0)$ then
$u_n\to u_0$.\\
$(c)$ If $u\in \mathcal{N}$ then $\J(u)>\J(tu+h')$ for any $t\geq 0$, $h'\in E'_{2,\mu}$ such that $tu+h'\neq u$.
\end{Lem}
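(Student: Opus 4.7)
For part (a), nonnegativity $I\ge 0$ follows once one notes that (G5) together with $g(x,0)=0$ (implied by (G2)) makes $g(x,u)$ share sign with $u$, so $G(x,u)=\int_0^u g(x,s)\,ds\ge 0$; hence both summands defining $I$ are nonnegative. For $\cT$-lower semicontinuity, suppose $u_n\cTto u_0$. The continuous embedding $E'_{2,\mu}\hookrightarrow E$ together with weak lower semicontinuity of the Hilbert norm $\|\cdot\|_E$ yields $\|u_0'\|_E\le\liminf_n\|u_n'\|_E$, while the compact embedding from Lemma~\ref{LemContEmbed} gives, along a subsequence, $u_n\to u_0$ pointwise a.e., so Fatou applied to $G(x,u_n)\ge 0$ produces $\int G(x,u_0)\,dx\le\liminf_n\int G(x,u_n)\,dx$. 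A standard subsequence argument promotes subsequential lsc to lsc of the whole sequence.

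For part (b), the added hypothesis $I(u_n)\to I(u_0)$, combined with the separate $\cT$-lsc of the two summands of $I$ proved in (a), forces convergence in each: $\|u_n'\|_E\to\|u_0'\|_E$ and $\int G(x,u_n)\,dx\to\int G(x,u_0)\,dx$. The first, together with the weak convergence $u_n'\rightharpoonup u_0'$ in the Hilbert space $E$, yields $\|u_n'-u_0'\|_E\to 0$. For convergence in $E'_{2,\mu}$ I would apply a Brezis--Lieb-type splitting: boundedness of $(u_n)$ in $L^t$ for $\mu\le t\le 2^*$ (via Lemma~\ref{LemContEmbed}), together with the a.e.\ convergence and the growth bound (G2), gives $\int[G(x,u_n)-G(x,u_n-u_0)-G(x,u_0)]\,dx\to 0$, and hence $\int G(x,u_n-u_0)\,dx\to 0$. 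Lemma~\ref{LemG3estimate} applied to $u_n-u_0$ then forces $|u_n-u_0|_{2,\mu}\to 0$, and using $u_n^+\to u_0^+$ in $H^1\hookrightarrow L^{2,\mu}$ one obtains the same for $u_n'-u_0'$, completing $u_n\to u_0$ in $E_{2,\mu}$.

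For part (c), the plan is to reduce everything to a pointwise inequality on the integrand after subtraction. Set $q(x,a,\sigma,w):=G(x,\sigma a+w)-G(x,a)+\tfrac{1-\sigma^2}{2}g(x,a)a-\sigma g(x,a)w$. Expanding $\J(u)-\J(tu+h')$ and invoking $\J'(u)(u)=0$ and $\J'(u)(h')=0$ (from $u\in\mathcal{N}$) yields the identity
\begin{equation*}
\J(u)-\J(tu+h')=\tfrac{1}{2}\|h'\|_E^2+\int_{\R^N}\!q(x,u,t,h')\,dx.
\end{equation*}
The main obstacle is proving the pointwise inequality $q\ge 0$, with equality (for $a\ne 0$) only at $(\sigma,w)=(1,0)$. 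I would substitute $v:=\sigma a+w$ and complete the square in $\sigma$, rewriting for $a\ne 0$
\begin{equation*}
q=G(x,v)-G(x,a)-\tfrac{g(x,a)}{2a}(v^2-a^2)+\tfrac{g(x,a)a}{2}\bigl(\sigma-\tfrac{v}{a}\bigr)^2,
\end{equation*}
and then minimize over $\sigma\ge 0$: the same-sign case ($v/a\ge 0$) reduces to the standard Nehari/Szulkin--Weth strict inequality $G(x,v)-G(x,a)>\tfrac{g(x,a)}{2a}(v^2-a^2)$ for $v\ne a$, a direct consequence of the strict monotonicity of $s\mapsto g(x,s)/s$ guaranteed by (G5); the opposite-sign case uses in addition the classical estimate $G(x,a)<\tfrac{g(x,a)a}{2}$ for $a\ne 0$, also an immediate consequence of (G5) via $G(x,a)=\int_0^a s\cdot\tfrac{g(x,s)}{s}\,ds$. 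Finally, $u\in\mathcal{N}$ forces $u^+\ne 0$ and hence $|\{u\ne 0\}|>0$; a short case split according to whether $t=1$ and whether $\supp(h')\subset\{u=0\}$ then shows $\int q\,dx>0$ whenever $tu+h'\ne u$, which delivers (c).
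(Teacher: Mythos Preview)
Your proof is correct and follows essentially the same route as the paper's. For (a) and (b) the arguments coincide (the paper spells out the Brezis--Lieb step via the Vitali convergence theorem, but your version is equivalent); for (c) both arrive at the same identity $\J(u)-\J(tu+h')=\tfrac12\|h'\|_E^2+\int q\,dx$, the only difference being that the paper cites the pointwise inequality $q\ge 0$ directly from \cite{SzulkinWeth}[Lem.~2.2], whereas you supply a self-contained proof via completing the square.
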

\begin{proof}
$(a)$ Let $u_n\cTto u_0$. Since $E_{2,\mu}$ is compactly embedded in $L^{2}_{loc}(\R^N)$, then we may assume that $u_n\to u_0$ in $L^{2}_{loc}(\R^N)$ and $u_n(x)\to u(x)$ a.e. in $\R^N$. In view of the Fatou's lemma and the weakly sequentially lower semicontinuity of the map $E' \ni u'\mapsto \frac{1}{2}\|u'\|^2_E\in\R$, we get $\liminf_{n\to\infty}I(u_n)\geq I(u_0)$.\\
$(b)$  Let $u_n\cTto u_0$ and $I(u_n)\to I(u_0)$. Since
$E' \ni u'\mapsto \frac{1}{2}\|u'\|^2_E\in\R$ is weakly sequentially lower semicontinuous and
$E_{2,\mu} \ni u\mapsto \int_{\R^N}G(x,u)\;dx\in\R$ is
$\cT$-sequentially lower semicontinuous, then 
$$\lim_{n\to\infty}\|u'_n\|_E^2=\|u_0'\|_E^2$$ 
and 
\begin{equation}\label{EqConvG}
\lim_{n\to\infty} \int_{\R^N}G(x,u_n)\;dx= \int_{\R^N}G(x,u_0)\;dx. 
\end{equation}
Note that, along a subsequence,
$$\|u_n'-u_0'\|^2_E=\|u_n'\|_E^2-\|u_0'\|^2_E-2\langle u_n'-u_0',u_0'\rangle_{E}\to 0.$$
Hence $u_n=u_n^++u_n'\to u_0=u_0^++u_0'$ in $E$. Thus we need to show that $u_n'\to u_0'$ in $L^{2,\mu}(\R^N)$.
Since $E_{2,\mu}$ is compactly embedded in $L_{loc}^2(\R^N)$, then we may assume that $u_n(x)\to u_0(x)$ a.e. in $\R^N$.
Observe that
\begin{eqnarray}\label{EqBrezisLieb}
\int_{\R^N}G(x,u_n)-G(x,u_n-u_0)\; dx
&=&\int_{\R^N}\int_0^1\frac{d}{dt}G(x,u_n-u_0+tu_0)\; dtdx\\\nonumber
&=&\int_0^1\int_{\R^N}g(x,u_n-u_0+tu_0)u_0\; dxdt.
\end{eqnarray}
Thus by $(G2)$ for any $\Omega\subset\R^N$
\begin{eqnarray*}
\int_{\Omega}|g(x,u_n-u_0+tu_0)u_0|\; dx
&\leq& a|u_n-u_0+tu_0|_{\mu}^{\mu-1}|u_0\chi_{\Omega}|_{\mu}\\
&&+a|u_n-u_0+tu_0|_{p}^{p-1}|u_0\chi_{\Omega}|_{p}
\end{eqnarray*}
In view of Lemma \ref{LemContEmbed} we obtain that $(u_n-u_0+tu_0)_{n\in N}$ is bounded in $L^\mu(\R^N)$ and in $L^p(\R^N)$. Therefore
for any $\eps>0$ there is $\delta>0$
such that for any $\Omega$ with the Lebesgue measure $|\Omega|<\delta$, we have
$$\int_{\Omega}|g(x,u_n-u_0+tu_0)u_0|\; dx < \eps$$
for any $n\in\N$.
Thus $(g(x,u_n-u_0+tu_0)u_0)_{n\in\N}$ is uniformly integrable.
Moreover for any $\eps>0$ there is $\Omega\subset\R^N$,
$|\Omega|<+\infty$,
such that for any $n\in\N$
$$\int_{\R^N\setminus\Omega}|g(x,u_n-u_0+tu_0)u_0|\; dx < \eps.$$
Hence a family $(g(x,u_n-u_0+tu_0)u_0)_{n\in\N}$ is tight over $\R^N$.
Since $g(u_n-u_0+tu_0)u_0\to g(tu_0)u_0$ a.e. in $\R^N$, then
in view of the Vitali convergence theorem $g(x,tu_0)u_0$ is integrable and
$$\int_{\R^N}g(x,u_n-u_0+tu_0)u_0\; dx\to \int_{\R^N}g(x,tu_0)u_0\; dx$$
as $n\to\infty$.
By (\ref{EqBrezisLieb}) we obtain
$$\int_{\R^N}G(x,u_n)-G(x,u_n-u_0)\; dx\to
\int_0^1\int_{\R^N}g(x,tu_0)u_0\; dxdt=\int_{\R^N}G(x,u_0)\; dx$$
as $n\to\infty$.
Taking into account (\ref{EqConvG}) we get
$$\lim_{n\to\infty}\int_{\R^N}G(x,u_n-u_0)\; dx=0$$
and by (\ref{eqG3estimate}) we have $u_n\to u_0$ in $L^{2,\mu}(\R^N)$. Hence $u_n'\to u_0'$ in $L^{2,\mu}(\R^N)$.\\
$(c)$ 
Let $u\in\mathcal{N}$. Note that for any $t\geq 0$ and $h'\in E'_{2,\mu}$ 
$$\J(t u+h')-\J(u)=\frac{t^2-1}{2}\|u^+\|^2
+I(u)-I(tu+h').$$
Since $u\in\mathcal{N}$
and $\J'(u)(u)=\|u^+\| ^2-I'(u)(u)$, then for $u\neq tu+h'$ 
\begin{eqnarray*}
\J(t u+h')-\J(u)&=&
I'(u)\Big(\frac{t^2-1}{2}u+th'\Big)
+I(u)-I(tu+h')\\
&=&-\frac12\|h'\|_E^2+\int_{\R^N}g(x,u)\Big(\frac{t^2-1}{2}u+th'\Big)
+G(x,u)-G(x,tu+h')\;dx,\\
&<&0
\end{eqnarray*}
where the last inequality follows from \cite{SzulkinWeth}[Lem. 2.2].
\end{proof}

Since $E^+\subset H^1(\R^N)$ and $H^1(\R^N)$ is not compactly embedded in $L^{\mu}(\R^N)$ and $L^{p}(\R^N)$, then we do not know if $\J$ satisfies $(PS)_c^{\cT}$-condition in $\mathcal{N}$ (see Section \ref{SectionAbstactSetting}, cf. \cite{BartschMederski}). Moreover Palais-Smale sequences do not have to be bounded since we do not assume (\ref{ARcond}) (cf. \cite{Jeanjean}). However the boundedness is attainable on $\mathcal{N}$. 

\begin{Lem}\label{LemCoercive}
$\J$ is coercive on $\mathcal{N}$, i.e. $\J(u)\to\infty$ as $\|u\|\to\infty$, $u\in\mathcal{N}$.
\end{Lem}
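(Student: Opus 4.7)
The plan is to argue by contradiction. Suppose $(u_n)\subset\mathcal{N}$ satisfies $\|u_n\|\to\infty$ and $\J(u_n)\leq d$ for some $d>0$. On $\mathcal{N}$ the identity $\J'(u_n)(u_n)=0$ reads $\|u_n^+\|_E^2=\|u_n'\|_E^2+\int_{\R^N} g(x,u_n)u_n\,dx$, while $(G5)$ yields $\tfrac12 g(x,u)u\geq G(x,u)\geq 0$ (the monotonicity computation of \cite{SzulkinWeth}, as invoked in the proof of Lemma~\ref{LemLG}(c)). Setting $t_n:=\|u_n^+\|_E$, I extract three bookkeeping inequalities: $\|u_n'\|_E\leq t_n$, $\int G(x,u_n)\,dx\leq t_n^2/2$, and $\J(u_n)=\int(\tfrac12 g(x,u_n)u_n-G(x,u_n))\,dx\geq 0$. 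If $(t_n)$ were bounded along a subsequence, then $\|u_n'\|_E$ would be bounded, so $\|u_n\|\to\infty$ would force $|u_n'|_{2,\mu}\to\infty$; since $|u_n^+|_{2,\mu}$ is bounded by Sobolev embedding, $|u_n|_{2,\mu}\to\infty$, and Lemma~\ref{LemG3estimate} would give $\int G(x,u_n)\,dx\to\infty$, contradicting the second inequality. Hence $t_n\to\infty$.

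Set $w_n:=u_n^+/t_n$, so $\|w_n\|_E=1$ and $(w_n)$ is bounded in $H^1(\R^N)$. Applying Lions' dichotomy (after a subsequence): either \emph{(A)} $w_n\to 0$ in $L^q(\R^N)$ for every $q\in(2,2^*)$, or \emph{(B)} there exist $y_n\in\Z^N$ and $\tilde w\neq 0$ with $\tilde w_n:=w_n(\cdot+y_n)\rightharpoonup\tilde w$ in $E^+$ and $\tilde w_n\to\tilde w$ a.e. In case (A), the max property Lemma~\ref{LemLG}(c), applied with $t=s/t_n$ and $h'=-(s/t_n)u_n'$, gives $\J(u_n)\geq\J(sw_n)=s^2/2-\int G(x,sw_n)\,dx$ for every $s\geq 0$; by $(G2)$ and $w_n\to 0$ in $L^\mu\cap L^p$, the integral tends to $0$ for each fixed $s$, so $\J(sw_n)\to s^2/2$, and choosing $s>\sqrt{2d}$ contradicts $\J(u_n)\leq d$.

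Case (B) is the delicate one. Set $\tilde u_n:=u_n(\cdot+y_n)$; by periodicity of $V$ and $g$, $\tilde u_n\in\mathcal{N}$ and $\J(\tilde u_n)=\J(u_n)\leq d$. The strategy is to sandwich $\int G(x,\tilde u_n)/t_n^2\,dx$ between two contradictory estimates. Rearranging $\J(\tilde u_n)=t_n^2/2-\|\tilde u_n'\|_E^2/2-\int G(x,\tilde u_n)\,dx$ and using $\J(\tilde u_n)\geq 0$ gives the upper bound $\int G(x,\tilde u_n)/t_n^2\,dx\leq 1/2$. For the matching lower bound I split on whether $|u_n'|_{2,\mu}/t_n$ is bounded. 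If $|u_n'|_{2,\mu}/t_n\to\infty$ along a subsequence, then $|\tilde u_n|_{2,\mu}/t_n\to\infty$ (because $|u_n^+|_{2,\mu}=O(t_n)$), so Lemma~\ref{LemG3estimate} gives $\int G(x,\tilde u_n)\geq c|\tilde u_n|_{2,\mu}^2$ for large $n$ and $\int G/t_n^2\to\infty$. Otherwise $v_n:=\tilde u_n/t_n$ is bounded in $E_{2,\mu}$ (unit $E$-norm in the $E^+$-part; $E$-norm $\leq 1$ in the $E'$-part by the Nehari bound; bounded $L^{2,\mu}$-norm by hypothesis), so along a subsequence $v_n\rightharpoonup v$ in $E_{2,\mu}$ and $v_n\to v$ a.e. Since $v^+=\tilde w\neq 0$ and $v'$ inherits the $L^2$-orthogonality to $L^+$ from its defining sequence in $E'\subset L'$, the decomposition $L^2=L^+\oplus L'$ rules out $v\equiv 0$, so $v\neq 0$ on a set $\Omega$ of positive measure. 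On $\Omega$, $|\tilde u_n(x)|=t_n|v_n(x)|\to\infty$, and $(G4)$ together with Fatou's lemma yield $\int G(x,\tilde u_n)/t_n^2\,dx\to\infty$, contradicting the upper bound.

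The main obstacle is that the Nehari identity only controls the $E$-part of $u_n'$ by $t_n$ and says nothing about the $L^{2,\mu}$-part; this asymmetry is exactly what forces the two subcases inside (B), pairing Lemma~\ref{LemG3estimate} with the $(G4)$--Fatou argument. A secondary technical point is certifying that the weak limit $v$ is nonzero, for which the orthogonality between $L^+$ and $L'$ in $L^2$, transported through the approximation of $E'_{2,\mu}$ by $E'$, is essential.
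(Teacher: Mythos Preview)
Your argument is correct and shares the paper's skeleton: contradiction, Lions' dichotomy applied to the $E^+$-component, the max property Lemma~\ref{LemLG}(c) in the vanishing branch, and $(G4)$ with Fatou in the non-vanishing branch. The organizational difference is the normalization. The paper sets $v_n:=u_n/\|u_n\|$ (full $E_{2,\mu}$-norm), so $(v_n)$ is automatically bounded in $E_{2,\mu}$ and Case~(B) goes through in one stroke; the price is that in the vanishing case one must show $\|v_n^+\|$ stays bounded away from $0$, which the paper extracts from Lemma~\ref{LemG3estimate} via a two-case estimate on $|u_n|_{2,\mu}$. You instead normalize by $t_n=\|u_n^+\|_E$, which makes the vanishing case immediate ($\|w_n\|=1$ for free) but transfers the burden elsewhere: a preliminary argument that $t_n\to\infty$, and the subcase split inside (B) to guarantee boundedness of $\tilde u_n/t_n$ in $E_{2,\mu}$. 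Both routes invoke Lemma~\ref{LemG3estimate} at the same logical juncture (controlling the $L^{2,\mu}$-part of $u_n$), just placed in different cases.

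One imprecision: your claim that ``$v'$ inherits the $L^2$-orthogonality to $L^+$ from its defining sequence in $E'\subset L'$'' is not justified as stated. The approximating sequence in $E'$ converges to $v'$ in $\|\cdot\|_{2,\mu}$, hence in $L^\mu$ and in $L^2_{loc}$, but not a priori in $L^2$, so membership of $v'$ in the $L^2$-closed subspace $L'$ does not follow directly. The clean argument---the one the paper uses in the parallel step of Lemma~\ref{LemLG}(c)---goes through $E$-orthogonality: $E^+\perp_E E'_{2,\mu}$ by construction, so if $v\equiv 0$ a.e.\ then $v'=-v^+\in H^1$, whence $v'\in H^1\cap E'_{2,\mu}$; writing $v'$ in the $H^1$-decomposition $E^+\oplus E'$ and using $\langle v',w\rangle_E=0$ for all $w\in E^+$ forces $v'\in E'$, and then $v^+=-v'\in E^+\cap E'=\{0\}$, contradicting $v^+=\tilde w\neq 0$.
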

\begin{proof}
Suppose that $\|u_n\|\to\infty$ as $n\to\infty$, $u_n\in\mathcal{N}$ and 
$\J(u_n)\leq c_1$ for some constant $c_1>0$. Let $v_n:=\frac{u_n}{\|u_n\|}$. Since $E_{2,\mu}$ is reflexive and compactly embedded in $L_{loc}^{2}(\R^N)$ then, up to a subsequence, $v_n\rightharpoonup v$ in $E_{2,\mu}$ and $v_n(x)\to v(x)$ a.e. in $\R^N$. Moreover there is a sequence $(y_n)_{n\in\N}\subset\R^N$ such that
\begin{equation}\label{EqLemLions}
\liminf_{n\to\infty}\int_{B(y_n,1)}|v_n^+|^2\; dx >0.
\end{equation}
Otherwise, in view of Lions lemma (see \cite{Willem}[Lemma 1.21]) we get that $v_n^+\to 0$ in $L^t(\R^N)$ for $2<t<2^*$. By $(G2)$ we get $\int_{\R^N}G(x,sv_n^+)\;dx\to 0$ for any $s\geq0$. Let us fix $s\geq0$. Hence by Lemma \ref{LemLG} $(c)$
\begin{equation}\label{EqIneq2}
c_1\geq \limsup_{n\to\infty}\J(u_n)\geq \limsup_{n\to\infty} \J(sv_n^+)= \frac{s^2}{2}\limsup_{n\to\infty}\|v_n^+\|^2.
\end{equation}
By (\ref{eqG3estimate}) and in view of Theorem \ref{ThLink1} $(a)$ we have
$$\frac{1}{2}(\|u_n^+\|^2-\|u_n'\|_E^2)-c\min\{|u_n|_{2,\mu}^2,|u_n|_{2,\mu}^\mu\}\geq \J(u_n)\geq c_{\inf}:=\inf_{\mathcal{N}}\J>0.$$
If $\liminf_{n\to\infty}|u_n|_{2,\mu}=0$ then, up to a subsequence, $|u_n|_{2,\mu}\to 0$, and for sufficiently large $n$
\begin{eqnarray*}
2\|u_n^+\|^2&\geq& \|u_n^+\|^2+\|u_n'\|_E^2+2c_{\inf}+2c\min\{|u_n|_{2,\mu}^2,|u_n|_{2,\mu}^\mu\}\\
&\geq& \|u_n^+\|^2+\|u_n'\|_E^2+|u_n|_{2,\mu}^2=\|u_n\|^2.
\end{eqnarray*}
If $\liminf_{n\to\infty}|u_n|_{\mu}>0$ then there is  $c_2\in (0,1)$ 
such that for sufficiently large $n$
\begin{eqnarray*}
2\|u_n^+\|^2&\geq& \|u_n^+\|^2+\|u_n'\|_E^2+2c_{\inf}+2c\min\{|u_n|_{2,\mu}^2,|u_n|_{2,\mu}^\mu\}\\
&\geq& c_2(\|u_n^+\|^2+\|u_n'\|_E^2+|u_n|_{2,\mu}^2)=c_2\|u_n\|^2.
\end{eqnarray*}
Therefore, passing to a subsequence if necessary, $c_3:=\inf_{n\in\N}\|v_n^+\|^2>0$
and by (\ref{EqIneq2})
$$c_1\geq \frac{s^2}{2}c_3$$
for any $s\geq0$. The obtained contradiction shows that (\ref{EqLemLions}) holds. Then we may assume that $(y_n)\in\Z^N$ and
$$\liminf_{n\to\infty}\int_{B(y_n,r)}|v_n^+|^2\; dx >0$$
for some $r>1$. Since $\J$ and $\mathcal{N}$ are invariant under translations of the form $u\mapsto u(\cdot-k)$, $k\in\Z^N$, then we may assume that $v_n^+\to v^+$ in $L^2_{loc}(\R^N)$ and $v^+\neq 0$. Note that if $v(x)\neq 0$ then $u_n(x)=v_n(x)\|u_n\|\to\infty$
and by  $(G4)$
$$\frac{G(x,u_n(x))}{\|u_n\|^2}=\frac{G(x,u_n(x))}{|u_n(x)|^2}|v_n(x)|^2\to\infty$$
as $n\to\infty$.
Therefore  by Fatou's lemma
\begin{eqnarray*}
\frac{\J(u_n)}{\|u_n\|^2}&=&\frac{1}{2}(\|v_n^+\|^2-\|v_n'\|^2_E)
-\int_{\R^N}\frac{G(x,u_n(x))}{\|u_n\|^2}\;dx\\
&\to&-\infty.
\end{eqnarray*}
Thus we get a contradiction and we conclude the coercivity.
\end{proof}

\noindent{\it Proof of Theorem \ref{Th1}.} In view of Theorem \ref{ThLink1} $(a)$
$$c_{\inf}=\inf_{\mathcal{N}}\J>0$$
and
there exists a $(PS)_{c_{\inf}}$-sequence $(u_n)_{n\in \N}\subset\mathcal{N}$, i.e. $\J(u_n)\to c_{\inf}$ and $\J'(u_n)\to 0$ as $n\to\infty$. By Lemma \ref{LemCoercive} we get that $(u_n)_{n\in \N}$ is bounded and after passing to a subsequence $u_n\rightharpoonup u$ in $E_{2,\mu}$. 
Then there is a sequence $(y_n)\in\R^N$ such that
\begin{equation}\label{EqLemLions1}
\liminf_{n\to\infty}\int_{B(y_n,1)}|u_n^+|^2\; dx >0.
\end{equation}
Otherwise, in view of Lions lemma (see \cite{Willem}[Lemma 1.21]), $u_n^+\to 0$ in $L^t(\R^N)$ for $2<t<2^*$. By $(G2)$ we obtain
$$\|u_n^+\|^2=\J'(u_n)(u_n^+)+\int_{\R^N}g(x,u_n)u_n^+\; dx\to 0$$
as $n\to\infty$. Hence
$$0<c_{\inf}=\lim_{n\to\infty}\J(u_n)\leq \lim_{n\to\infty}\frac{1}{2}\|u_n^+\|^2=0$$
and we get a contradiction. Therefore (\ref{EqLemLions1}) holds 
and we may assume that there is a sequence $(y_n)\in\Z^N$ such that
\begin{equation}\label{EqLemLions2}
\liminf_{n\to\infty}\int_{B(y_n,r)}|u_n^+|^2\; dx >0
\end{equation}
for some $r>1$. 
Since $\|u_n(\cdot + y_n)\|=\|u_n\|$, then there is $u\in E_{2,\mu}$ such that, up to a subsequence, $u_n(\cdot + y_n)\rightharpoonup u$ in $E_{2,\mu}$, $u_n(x+y_n)\to u(x)$ a.e. on $\R^N$ and $u_n^+(\cdot + y_n)\to u^+$ in $L^2_{loc}(\R^N)$.
By (\ref{EqLemLions2}) we get $u^+\neq 0$ and then $u\neq 0$.
Since $\J$ and $\mathcal{N}$ are invariant under translations of the form $u\mapsto u(\cdot+y)$, $y\in\Z^N$, then $\J'(u)=0$. Observe that 
$u\in \mathcal{N}$, and by $(G2)$ and $(G5)$
$$\frac{1}{2} g(x,u_n(x+y_n))u_n(x+y_n)-
G(x,u_n(x+y_n))\geq 0.$$
Therefore, in view of the Fatou's lemma
$$c_{\inf}=\lim_{n\to\infty}\J(u_n(\cdot+y_n)) =
 \lim_{n\to\infty}
\Big(\mathcal{J}(u_n(\cdot+y_n)) -
\frac{1}{2}\mathcal{J}'(u_n(\cdot+y_n))u_n(\cdot+y_n)\Big) 
\geq \J(u).$$
Thus we get $\J(u)=c_{\inf}$. Since $u\in E_{2,\mu}$ is a  solution to (\ref{NSE}), then by Corollary \ref{CorollaryUVanish} we get $u(x)\to 0$ as $|x|\to\infty$.
\hfill $\square$\\

\section{Multiple solutions}\label{SectionMultiplicity}

Note that if $u\in E_{2,\mu}$ is a critical point of $\J$ then the orbit under the action of $\Z^N$, $\mathcal{O}(u):=\{u(\cdot -k)|\;k\in\Z^N\}$
consists of critical points. Two critical points $u_1,u_2\in E_{2,\mu}$ are said to be {\em geometrically distinct} if $\mathcal{O}(u_1)\cap \mathcal{O}(u_2)=\emptyset$. 
In view of Theorem \ref{ThLink1} $(b)$ we know that $\Psi:=\J\circ n: S^+\to \R$ is a $C^1$ map. Observe that in order to prove Theorem \ref{Th2} it is enough to show that $\Psi$ has infinitely many geometrically distinct critical points (see Theorem \ref{ThLink1} $(b)$). The following lemma is crucial in the consideration of the multiplicity of critical points (cf. \cite{SzulkinWeth}[Lem. 2.14]).

\begin{Lem}\label{LemDiscretePS}
Let $d\geq c_{\inf}$. If $(u_n^1)$, $(u_n^2)\subset \Psi^d:=\{u\in S^+|\; \Psi(u)\leq d\}$ are two Palais-Smale sequences for $\Psi$, then either $\|u_n^1-u_n^2\|\to 0$ as $n\to\infty$ or 
\begin{equation}\label{EqDiscretePS}
\limsup_{n\to\infty}\|u_n^1-u_n^2\|\geq \rho(d)\inf\{\|u_1-u_2\||\; \Psi'(u_1)=\Psi'(u_2)=0,\; u_1\neq u_2\in S^+\}, 
\end{equation}
where $\rho(d)>0$ depends on $d$ but not on the particular choice of Palais-Smale sequences.
\end{Lem}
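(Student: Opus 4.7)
The plan is to reduce the question to the Nehari-Pankov manifold $\mathcal{N}$ via the homeomorphism $n$ of Theorem \ref{ThLink1}(b), exploit the coercivity of $\J$ on $\mathcal{N}$ established in Lemma \ref{LemCoercive}, and then apply the concentration-compactness argument developed in the proof of Theorem \ref{Th1} to the two translated lifted sequences. Set $\delta := \inf\{\|u_1 - u_2\| : \Psi'(u_1) = \Psi'(u_2) = 0,\ u_1 \neq u_2 \in S^+\}$. If $\delta = 0$ the estimate is trivial, so assume $\delta > 0$ and that $\|u_n^1 - u_n^2\| \not\to 0$; pass to a subsequence with $\|u_n^1 - u_n^2\| \geq \eps_0 > 0$.

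Lift both sequences to $\mathcal{N}$: set $v_n^i := n(u_n^i) = t_n^i u_n^i + h_n^i$, where $t_n^i > 0$ and $h_n^i \in E'_{2,\mu}$. By Theorem \ref{ThLink1}(b), each $(v_n^i)$ is a Palais-Smale sequence for $\J$ with $\J(v_n^i) = \Psi(u_n^i) \leq d$, so by Lemma \ref{LemCoercive} it is bounded, $\|v_n^i\| \leq M(d)$. Since $\|v_n^{i,+}\|^2 = (t_n^i)^2$ and $\J(v_n^i) = \tfrac{1}{2}(t_n^i)^2 - I(v_n^i)$ with $I \geq 0$, the uniform lower bound $\J(v_n^i) \geq c_{\inf} > 0$ from Theorem \ref{ThLink1}(a) forces $t_n^i \geq \sqrt{2 c_{\inf}}$, while $t_n^i, \|h_n^i\| \leq M(d)$ follow from $\|v_n^i\| \leq M(d)$.

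Next, mimicking the concentration step in the proof of Theorem \ref{Th1}, produce $k_n^i \in \Z^N$ such that, up to a subsequence, the translates $\tilde v_n^i := v_n^i(\cdot - k_n^i)$ satisfy $\tilde v_n^i \rightharpoonup w^i \neq 0$ in $E_{2,\mu}$, and the weak limits $w^i$ are critical points of $\J$ lying on $\mathcal{N}$; then the projections $w_\ast^i := w^{i,+}/\|w^{i,+}\| \in S^+$ are critical points of $\Psi = \J \circ n$. Pass to a further subsequence so that either $|k_n^1 - k_n^2| \to \infty$, or $k_n^1 - k_n^2 \equiv k \in \Z^N$ is constant. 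In the unbounded case, the $\Z^N$-invariance of $\J$ and the uniform lower bound $t_n^i \geq \sqrt{2 c_{\inf}}$ force $\liminf_n \|u_n^1 - u_n^2\| \geq c_1(d) > 0$ with $c_1(d)$ independent of the particular sequences. In the bounded case, if $w_\ast^1 \neq w_\ast^2(\cdot + k)$ then $\|w_\ast^1 - w_\ast^2(\cdot + k)\| \geq \delta$, and the two-sided bounds $\sqrt{2c_{\inf}} \leq t_n^i \leq M(d)$, $\|h_n^i\| \leq M(d)$ allow this to propagate to $\limsup_n \|u_n^1 - u_n^2\| \geq c_2(d)\, \delta$; if instead $w_\ast^1 = w_\ast^2(\cdot + k)$, then $w^1 = w^2(\cdot + k)$ by the uniqueness in Theorem \ref{ThLink1}(b), and the difference $\tilde v_n^1 - \tilde v_n^2(\cdot + k) \rightharpoonup 0$ combined with Lemma \ref{LemLG}(b) forces $\|u_n^1 - u_n^2\| \to 0$, contradicting $\|u_n^1 - u_n^2\| \geq \eps_0$. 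Setting $\rho(d) := \min\{c_1(d)/\delta,\ c_2(d)\}$ gives the desired estimate.

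The main obstacle is the quantitative \emph{distance transfer} between $\mathcal{N}$ and $S^+$: to ensure that $\rho(d)$ depends only on $d$ and not on the particular Palais-Smale sequences, one needs the uniform two-sided bounds $\sqrt{2c_{\inf}} \leq t_n^i \leq M(d)$ and $\|h_n^i\| \leq M(d)$ together with a uniform Lipschitz-type control of $n^{-1}$ on the compact region where these bounds hold. A second delicate point is the last step of the bounded case: showing that if both lifted sequences converge weakly, after aligning translations, to the same critical point, then their difference goes to zero strongly in $E_{2,\mu}$; this is where Lemma \ref{LemLG}(b) is used, and verifying that $I(\tilde v_n^1 - \tilde v_n^2(\cdot + k)) \to I(0) = 0$ is the technically most subtle step due to the strongly indefinite structure of $E_{2,\mu} = E^+ \oplus E'_{2,\mu}$.
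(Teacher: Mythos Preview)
Your overall plan---lift to $\mathcal{N}$ via $n$, use coercivity to get boundedness, then run a concentration-compactness argument---is the right framework, but the way you split into cases creates real gaps that the paper avoids by a different, sharper dichotomy.

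The paper does \emph{not} translate each sequence separately. Instead it applies the Lions lemma directly to the \emph{difference} $n(u_n^1)^+ - n(u_n^2)^+$ in $L^\mu$ and $L^p$. This yields a clean two-case split: either $|n(u_n^1)^+ - n(u_n^2)^+|_\mu \to 0$ and $|n(u_n^1)^+ - n(u_n^2)^+|_p \to 0$, in which case a direct computation using $(G2)$ and $\J'(n(u_n^i)) \to 0$ gives $\|n(u_n^1)^+ - n(u_n^2)^+\| \to 0$ and hence $\|u_n^1 - u_n^2\| \to 0$; or there is a \emph{single} sequence $(y_n)\subset\Z^N$ such that, after the common translation, both $n(u_n^i)(\cdot+y_n)$ converge weakly to critical points $v_1, v_2$ with $v_1^+ \neq v_2^+$. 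The inequality \eqref{EqDiscretePS} then follows from the weak lower semicontinuity of the norm together with the two-sided bounds $\sqrt{2c_{\inf}} \leq \|n(u_n^i)^+\| \leq s(d)$, and the case $v_1=0$ or $v_2=0$ is handled by the same estimate.

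Your separate translations $k_n^1, k_n^2$ cause two concrete problems. First, the unbounded case $|k_n^1 - k_n^2|\to\infty$ does not by itself force a lower bound on $\|u_n^1 - u_n^2\|$: concentration points are not unique, and each $v_n^i$ may carry mass in many places, so diverging \emph{chosen} centres say nothing quantitative about the difference of the normalized $u_n^i$. Second, and more seriously, in your ``bounded, same-limit'' case you invoke Lemma~\ref{LemLG}(b) to conclude strong convergence of $\tilde v_n^1 - \tilde v_n^2(\cdot+k)$ to $0$, but that lemma requires $\cT$-convergence (strong convergence of the $E^+$-component) together with $I$-convergence; you only have weak convergence to $0$, and you yourself flag that verifying $I(\tilde v_n^1 - \tilde v_n^2(\cdot+k))\to 0$ is unresolved. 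In fact this step cannot be completed in your framework without essentially reintroducing the Lions argument on the difference---which is exactly the paper's starting point.

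The fix is simple: replace your separate concentration step by Lions applied to $n(u_n^1)^+ - n(u_n^2)^+$; then the ``same-limit'' case disappears entirely, and no appeal to Lemma~\ref{LemLG}(b) is needed.
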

\begin{proof}
Let $(u_n^1)$, $(u_n^2)\subset \Psi^d:=\{u\in S^+|\; \Psi(u)\leq d\}$ be two Palais-Smale sequences for $\Psi$. Let us consider two cases.\\
\noindent {\em Case 1:} $|n(u_n^1)^+-n(u_n^2)^+|_{\mu}\to 0$ and $|n(u_n^1)^+-n(u_n^2)^+|_{p}\to 0$.
Observe that by $(G2)$
\begin{eqnarray*}
\|n(u_n^1)^+-n(u_n^2)^+\|^2&=&\J'(n(u_n^1))(n(u_n^1)^+-n(u_n^2)^+)
-\J'(n(u_n^2))(n(u_n^1)^+-n(u_n^2)^+)\\
&&+ \int_{\R^N} (g(x,n(u_n^1))-g(x,n(u_n^2)))(n(u_n^1)^+-n(u_n^2)^+)\; dx\\
&\leq & \J'(n(u_n^1))(n(u_n^1)^+-n(u_n^2)^+)
-\J'(n(u_n^2))(n(u_n^1)^+-n(u_n^2)^+)\\
&& + a (|n(u_n^1)|_\mu^{\mu-1}+
|n(u_n^2)|_\mu^{\mu-1})\cdot |n(u_n^1)^+-n(u_n^2)^+|_\mu\\
&&  + a (|n(u_n^1)|_p^{p-1}+
|n(u_n^2)|_p^{p-1})\cdot |n(u_n^1)^+-n(u_n^2)^+|_p.
\end{eqnarray*}
By Theorem \ref{ThLink1} $(b)$ we know that $(n(u_n^1))_{n\in\N}$ and $(n(u_n^2))_{n\in\N}$ are Palais-Smale sequences for $\J$ and, by Lemma \ref{LemCoercive}, they are bounded in $E_{2,\mu}$. Since $E_{2,\mu}$ is continuously embedded in $L^\mu(\R^N)$ and in $L^p(\R^N)$, then
$$\|n(u_n^1)^+-n(u_n^2)^+\|\to 0.$$
Observe that, if $u=u^++u'\in \mathcal{N}$ then inequality $\J(u)\geq c_{\inf}$ implies that
\begin{equation}
\|u^+\|\geq \max\{ \sqrt{2c_{\inf}}, \|u'\|_E \}.
\end{equation}
Therefore similarly as in \cite{SzulkinWeth}[Lem. 2.13] we infer that
$$\|u_n^1-u_n^2\|=\Big\|\frac{n(u_n^1)^+}{\|n(u_n^1)^+\|}-\frac{n(u_n^1)^+}{\|n(u_n^1)^+\|}\Big\|\leq 
\sqrt{\frac{2}{c_{\inf}}}\|n(u_n^1)^+-n(u_n^2)^+\|.$$
Thus
$$\|u_n^1-u_n^2\|\to 0.$$

\noindent {\em Case 2:} $|n(u_n^1)^+-n(u_n^2)^+|_{\mu}\nrightarrow 0$ or $|n(u_n^1)^+-n(u_n^2)^+|_{p}\nrightarrow 0$.\\
In view of Lions lemma \cite{Willem}[Lemma 1.21]
there is a sequence $(y_n)\in\Z^N$ and $r>1$ such that
\begin{equation}\label{EqLemLions3}
\liminf_{n\to\infty}\int_{B(y_n,r)}|n(u_n^1)^+-n(u_n^2)^+|^2\; dx >0.
\end{equation}
Then we may assume that, up to a subsequence, 
$$n(u_n^1)(\cdot+y_n)\rightharpoonup v_1,\; n(u_n^2)(\cdot+y_n)\rightharpoonup v_2 \hbox{ in }E_{2,\mu},$$
$$n(u_n^1)^+(\cdot+y_n)\to v_1^+,\; n(u_n^2)^+(\cdot+y_n)\to v_2^+ \hbox{ in }L^2_{loc}(\R^N)$$
and
$$\|n(u_n^1)^+(\cdot + y_n)\|\to \alpha_1,\; \|n(u_n^2)^+(\cdot + y_n)\|\to \alpha_2$$
for some $\alpha_1$, $\alpha_2\geq \sqrt{2c_{\inf}}$.
From (\ref{EqLemLions3}) we infer that $v_1^+\neq v_2^+$ and thus $v_1\neq v_2$.
Since $n$, $n^{-1}$, $\J'$, $(\J\circ n)'$ are equivariant with respect to $\Z^N$-action, then $\J'(v_1)=\J'(v_2)=0$. Observe that if $v_1\neq 0$ and $v_2\neq 0$ then $v_1$, $v_2\in\mathcal{N}$ and
\begin{eqnarray*}
\liminf_{n\to\infty}\|u_n^1-u_n^2\|&=&
\liminf_{n\to\infty}\|(u_n^1-u_n^2)(\cdot + y_n)\|\\
&=&
\liminf_{n\to\infty}\Big\|\frac{n(u_n^1)^+(\cdot + y_n)}{\|n(u_n^1)^+(\cdot + y_n)\|}-\frac{n(u_n^2)^+(\cdot + y_n)}{\|n(u_n^2)^+(\cdot + y_n)\|}\Big\|\\
&\geq& \Big\|\frac{v_1^+}{\alpha_1}-\frac{v_2^+}{\alpha_2}\Big\|
\geq \min\Big\{\frac{\|v_1^+\|}{\alpha_1},\frac{\|v_2^+\|}{\alpha_2}\Big\}
\Big\|\frac{v_1^+}{\|v_1^+\|}-\frac{v_2^+}{\|v_2^+\|}\Big\|\\
&\geq& \frac{\sqrt{2c_{\inf}}}{s(d)}
\Big\|n^{-1}(v_1)-n^{-1}(v_2)\Big\|
\end{eqnarray*}
where $s(d):=\sup\{\|u^+\||\; u\in\mathcal{N},\;\J(u)\leq d\}$. 
By Theorem \ref{ThLink1}, $n^{-1}(v_1)$, $n^{-1}(v_2)$ are critical points of $\Psi$ and  we get (\ref{EqDiscretePS}).
Note that if $v_1=0$ or $v_2=0$ then, similarly as above, we show that
$$\liminf_{n\to\infty}\|u_n^1-u_n^2\|\geq \frac{\sqrt{2c_{\inf}}}{s(d)}.$$
and again (\ref{EqDiscretePS}) holds.
\end{proof}

\noindent{\it Proof of Theorem \ref{Th2}.} 
Let $g$ be odd. In view of Theorem \ref{ThLink1} $(b)$ we get that $n$ is equivariant with respect to the $\Z^N$-action given by $u\mapsto u(\cdot-k)$ for $k\in \Z^N$. Moreover $\J$ is even and $n$ is odd. Therefore $\Psi$ is even and invariant with respect to the $\Z^N$-action.
Let $\F$ be the set of geometrically distinct critical points of $\Psi$ and assume that $\F$  is finite. Then, 
similarly as in \cite{SzulkinWeth}[Lem. 2.13], we show that  
$$\inf\{\|u_1-u_2\||\; \Psi'(u_1)=\Psi'(u_2)=0,\; u_1\neq u_2\in S^+\}>0.$$
The obtained discreteness of Palais-Smale sequences in Lemma \ref{LemDiscretePS} allows us to  
repeat the following arguments: Lemma 2.15, Lemma 2.16 and proof of Theorem 1.2 from \cite{SzulkinWeth}. In fact, we show that for any $k\in\N$ there is $u\in S^+$ such that $\Psi'(u)=0$ and $\Psi(u)=c_k$, where
$$c_k:=\inf\{d\in\R|\; \gamma(\Psi^d)\geq k\}$$
and $\gamma$ denotes the usual Krasnoselskii genus (see \cite{Struwe}). Moreover $c_k<c_{k+1}$ for any $k\in\Z$ and thus we get the contradiction (see \cite{SzulkinWeth} for detailed arguments).
In view of Theorem \ref{ThLink1} $(b)$ we obtain the existence of infinitely many geometrically distinct solutions to (\ref{NSE}).
\hfill $\square$\\

{\bf Acknowledgment.}
The author would like to thank the referee for the valuable comments and suggestions helping to improve the paper.

\noindent {\sc Address of the author:}\\[1em]
\parbox{8cm}{Jaros\l aw Mederski\\
 Nicolaus Copernicus University \\
 ul.\ Chopina 12/18\\
 87-100 Toru\'n\\
 Poland\\
 jmederski@mat.umk.pl\\
 }
\end{document}